\journal{Journal of Mathematical Analysis and Applications }
\newmdenv[
    frametitle=TO-DO,
    frametitlefont=\scshape,
    backgroundcolor=lightgray, 
    roundcorner=6pt,
    leftmargin=30,
    rightmargin=30,
    linecolor=red,
    linewidth=1pt,
    skipabove=3pt,
    skipbelow=4pt,
    font=\sffamily\itshape
]{TODO}
\newcommand*{\cinv}[2]{\left[#1 , #2\right]}
\newcommand*{\ocinv}[2]{\left(#1 , #2\right]}
\newcommand*{\coinv}[2]{\left[#1 , #2\right)}
\newcommand*{\set}[2]{\left\{ #1\colon #2\right\}}
\newcommand*{\abs}[1]{\left\vert #1\right\vert}
\newcommand*{\norm}[1]{\left\Vert #1\right\Vert}
\newcommand*{\lipnorm}[1]{\left\Vert #1\right\Vert_{\mathrm{lip}}}
\newcommand*{\ball}[2]{B\left( #1, #2 \right)}
\newcommand*{\charfunction}[1]{\chi_{#1}}
\newcommand*{\bR}{\mathbb{R}}
\newcommand*{\bL}{\mathbb{L}}
\newcommand*{\Neig}[1]{\mathcal{U}\left(#1\right)}
\newcommand*{\diff}{\mathop{}\!\mathrm{d}}
\newcommand*{\frechet}[2]{\diff #1\left(#2\right)}
\begin{document}
\sloppy

\newcommand{\Lip}{\operatorname{Lip}}
\newcommand{\lip}{\operatorname{lip}}
\newcommand{\LLip}{\operatorname{\mathbb{L}ip}}
\newcommand{\Int}{\operatorname{Int}}

\newtheorem{theorem}{Theorem}[section]
\newtheorem{lemma}[theorem]{Lemma}
\newtheorem{proposition}[theorem]{Proposition}
\newtheorem{corollary}[theorem]{Corollary}
\newtheorem{remark}[theorem]{Remark}

\theoremstyle{definition}
\newtheorem{definition}{Definition}
\newtheorem{example}{Example}
\newtheorem{problem}{Problem}

\begin{frontmatter}


 
\title{Classification of Lipschitz derivatives in terms of semicontinuity and the Baire limit functions \tnoteref{Grant}}
\tnotetext[Grant]{
    The research was supported by the University of Silesia in Katowice, Mathematics Department (Iterative Functional Equations
and Real Analysis program)
	}

\author[us]{Oleksandr V. Maslyuchenko \orcidlink{0000-0002-1493-9399}\corref{CorrAuthor}} 
\ead{ovmasl@gmail.com}
\cortext[CorrAuthor]{Corresponding author}
\author[us]{Ziemowit M. Wójcicki \orcidlink{0009-0005-6433-0708}}
\ead{ziemo1@onet.eu}
\affiliation[us]
{organization={Institute of Mathematics, University of Silesia in Katowice},
addressline={Bankowa 12}, 
            city={Katowice},
            postcode={40-007}, 
            country={Poland}}
\begin{abstract}
    We introduce the generalized notion of semicontinuity of
    a function defined on a topological space and derive the
    useful classification of the so-called Lipschitz derivatives
    of functions defined on a metric space.
    Secondly, we investigate some connections of the Lipschitz
    derivatives defined on normed spaces to the Fr\'{e}chet
    derivative and relations between little, big and local
    Lipschitz derivatives (denoted by $\lip f$, $\Lip f$ and $\LLip f$
    respectively) in terms of Baire limit functions.
    In particular, we prove that $\lip f$ is $\mathcal{F}_{\sigma}$-lower,
    $\Lip f$ is $\mathcal{F}_{\sigma}$-upper, $\LLip f$ is
    upper semicontinuous. Moreover, for a function $f$ defined on an open
    or convex subset of a normed space, the upper Baire limit function of functions
    $\lip f$ and $\Lip f$ are equal to $\LLip f$.
\end{abstract}



\begin{keyword}
big Lipschitz derivative \sep
little  Lipschitz derivative \sep
local Lipschitz derivative \sep
semicontinuous function \sep
$F_\sigma$-semicontinuous function \sep
pair of Hahn \sep
$F_\sigma$-pair of Hahn 


\MSC[2020] 
46G05 
(Primary)
\sep
46T20 
\sep
26A16 
\sep
26A21 
(Secondary)

\end{keyword}

\end{frontmatter}



\section{Introduction}

The Lipschitz derivatives are useful tools for investigation of different notions of differentiability.
For example, the big Lipschitz derivative $\Lip f$ of a given function
$f$ often occurs in theorems of Rademacher--Stepanov type (see for example \cite{Ra, St, MalZa, EvGa}).
Statements of this type usually involves the set $L(f)=\set{x\in\bR}{\Lip f(x)<\infty}$.
The local Lipschitz derivative $\LLip f$ together with $\Lip f$ characterize the local and pointwise Lipschitzness 
of functions defined on a metric space \cite{HerMas}.
The little Lipschitz derivative was introduced by Cheeger in \cite{Cheg} and together with
$\Lip f$ play important role in the research of the first order differential calculus in metric spaces.
The crucial fact is the purely metric character of the definitions of Lipschitz derivatives.
The above considerations lead to the natural question of characterizing the sets 
$\ell(f)$, $L(f)$ and $\bL(f)$ for functions defined on metric spaces.
In recent years, this problem has been investigated in many articles, such as \cite{RmZu, BuHaRmZu, Ha}
In particular, in \cite{BuHaRmZu} it was shown that $\ell(f)$ is a $G_{\delta\sigma}$-set
and $L(f)$ is an $F_{\sigma}$-set for a function $f\colon\bR\to\bR$.
In our approach, we introduce generalized notions of semicontinuity of a function
and classify the Lipschitz derivatives with help of these properties. This allows
us to easily derive the Borel type of sets $\ell(f)$, $L(f)$, $\bL(f)$ of
a function $f$ acting between arbitrary metric spaces.

Another interesting question to consider is whether for a given
triplet of functions $(u, v, w)$, there exists a continuous
function $f$ such that $\lip f=u$, $\Lip f=v$ and $\LLip f=w$.
Some advances in this direction were made in \cite{BuHaMaVe2020} and \cite{BuHaMaVe2021}.
Our characterization of Lipschitz derivatives in terms of generalized
semicontinuity constrains the possible choice of functions $(u,v,w)$.
Moreover, we obtain another necessary criterion for a triple $(u,v,w)$ 
defined on locally convex subset of a normed space:
the upper Baire limits functions of $u$ and $v$ are equal to $w$.

\section{Lipschitz derivatives}

Let $X$ be a metric space, $a\in X$ and $\varepsilon>0$. We always denote the metric on $X$ by $|\,\cdot\,-\,\cdot\,|_X$ and
\begin{align*}
	B(a,\varepsilon)&=B_X(a,\varepsilon)=\big\{x\in X\colon|x-a|_X<\varepsilon\big\},\\
	B[a,\varepsilon]&=B_X[a,\varepsilon]=\big\{x\in X\colon|x-a|_X\le\varepsilon\big\}.
\end{align*}

\begin{definition}
	Let $X$ and $Y$ be metric spaces, $f\colon X\to Y$ be a function, $x\in X$. Denote
	\begin{itemize}
		\item $\|f\|_{\lip}=\sup\limits_{u\ne v\in X}\frac1{|u-v|_X}\big|f(u)-f(v)\big|_Y$,
		\item $\LLip f(x)=\limsup\limits_{(u,v)\to (x,x)}\frac1{|u-v|_X}\big|f(u)-f(v)\big|_Y$,
		\item $\Lip f(x)=\limsup\limits_{u\to x}\frac1{|u-x|_X}\big|f(u)-f(x)\big|_Y$,
		\item $\lip f(x)=\liminf\limits_{r\to0^+}\sup\limits_{u\in B(x,r)}\frac1r\big|f(u)-f(x)\big|_Y$;
	\end{itemize}
The number $\|f\|_{\lip}$ is \emph{Lipschitz constant of $f$}.  The functions $\LLip f$,  $\Lip f$  and $\lip f$ are called the \emph{local, big and little Lipschitz derivative} respectively.
\end{definition}
We denote by $X^d$ the set of all non-isolated points of $X$.
Throughout the paper, we assume that $\sup\varnothing=0$. 
As a consequence of this assumption we have 
$\LLip f(x)=\Lip f(x)=\lip f(x)=0$ for any $x\in X\setminus X^d$. 

Obviously, if $Y$ is a normed space then
$\|\cdot\|_{\lip}$ is an extended seminorm
on $Y^X$ in the sense \cite{SaTaGa}. Moreover,
$\|\cdot\|_{\lip}$ is a norm on the space $\Lip_a(X,Y)$ of all
Lipschitz functions $f\colon {X\to Y}$ vanishing at some fixed point $a\in X$.

We introduce some auxiliary notations:
	\begin{itemize}
		\item $\LLip^rf(x)=\big\|f|_{B(x,r)}\big\|_{\lip}=\sup\limits_{u\ne v\in B(x,r)}\frac1{|u-v|_X}\big|f(u)-f(v)\big|_Y$
		\item $\Lip^r f(x)=\sup\limits_{u\in B(x,r)}\frac1r\big|f(u)-f(x)\big|_Y, \Lip^r_+f(x)=\sup\limits_{u\in B[x,r]}\tfrac1r\big|f(u)-f(x)\big|_Y$
		\item $\Lip_r f(x)=\sup\limits_{0<\varrho<r}\Lip^\varrho f(x),\ \ \ \ \ \lip_r f(x)=\inf\limits_{0<\varrho<r}\Lip^\varrho f(x)$;
	\end{itemize}
Therefore, the definitions of the Lipschitz derivatives might be rewritten as follows.
\begin{align}
	\label{eqn:ILip_definition}
        \LLip f(x)&=\inf\limits_{r>0}\LLip^rf(x)\\
        \label{eqn:lip_definition}
        \lip f(x)&=\liminf\limits_{r\to0^+}\Lip^r f(x)
\end{align}

Some authors (see, for example, \cite{Ha,BuHaMaVe2021,BuHaRmZu})
define  $\Lip f$  and $\lip f$ using the function
$\Lip^r_+f$ instead of $\Lip^rf$. 
In the case where $X$ is a normed space, we have $B[x,r]=\overline{B(x,r)}$. 
Therefore, $\Lip^rf(x)=\Lip^r_+f(x)$ for any continuous function $f$.
But the previous equality does not hold for the discrete metric on $X$,
nonconstant $f$ and $r=1$. However, we have the following.

\begin{proposition}
    Let $X$ and $Y$ be metric spaces and $f\colon X\to Y$ be a function.
    Then, for any non-isolated point $x\in X$, the following equalities hold \[
        \Lip f(x)=\limsup_{r\to 0^{+}}\Lip^rf(x)=\limsup_{r\to 0^{+}}\Lip_{+}^rf(x).
    \]
\end{proposition}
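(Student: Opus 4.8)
The plan is to sandwich the three quantities by establishing
\[
\Lip f(x)\;\le\;\limsup_{r\to 0^+}\Lip^r f(x)\;\le\;\limsup_{r\to 0^+}\Lip^r_+ f(x)\;\le\;\Lip f(x).
\]
The middle inequality is free: $B(x,r)\subseteq B[x,r]$ gives $\Lip^r f(x)\le\Lip^r_+ f(x)$ for every $r>0$, and this passes to the upper limit as $r\to 0^+$. I would work throughout in $[0,+\infty]$, so that finiteness of the derivatives is never assumed, and I would abbreviate $\varphi(u)=\tfrac1{|u-x|_X}|f(u)-f(x)|_Y$ for $u\in X\setminus\{x\}$, noting that, because $x$ is non-isolated, $\Lip f(x)=\limsup_{u\to x}\varphi(u)=\inf_{r>0}\sup\{\varphi(u):0<|u-x|_X\le r\}$.

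For the rightmost (``upper'') inequality I would fix $r>0$ and observe that for $u\in B[x,r]$ with $u\ne x$ one has $|u-x|_X\le r$, hence $\tfrac1r|f(u)-f(x)|_Y\le\varphi(u)$; the point $u=x$ contributes $0$ and is irrelevant, and admissible $u$ exist since $x$ is non-isolated. Thus $\Lip^r_+ f(x)\le\sup\{\varphi(u):0<|u-x|_X\le r\}$, and letting $r\to 0^+$ and using the reformulation of $\Lip f(x)$ just recorded yields $\limsup_{r\to 0^+}\Lip^r_+ f(x)\le\Lip f(x)$.

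For the leftmost (``lower'') inequality I would pick a sequence $u_n\to x$ with $u_n\ne x$ and $\varphi(u_n)\to\Lip f(x)$ (in the extended sense, allowing the value $+\infty$); such a sequence exists precisely because $x$ is non-isolated. Writing $\varrho_n=|u_n-x|_X>0$, the one place requiring care is that $u_n$ lies in the \emph{open} ball $B(x,\varrho)$ only for $\varrho>\varrho_n$, not for $\varrho=\varrho_n$; to get around this I would enlarge the radius and set $r_n=(1+\tfrac1n)\varrho_n$, so that $r_n\to 0^+$ and $u_n\in B(x,r_n)$. Then
\[
\Lip^{r_n} f(x)\;\ge\;\frac1{r_n}|f(u_n)-f(x)|_Y\;=\;\frac{\varrho_n}{r_n}\,\varphi(u_n)\;=\;\frac{n}{n+1}\,\varphi(u_n)\;\longrightarrow\;\Lip f(x),
\]
and since $r_n\to 0^+$ this gives $\limsup_{r\to 0^+}\Lip^r f(x)\ge\limsup_n\Lip^{r_n} f(x)\ge\Lip f(x)$. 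Putting the three inequalities together proves the proposition. The only genuinely delicate point is this radius perturbation in the lower estimate, which compensates for the strict inequality in the definition of the open ball $B(x,r)$; everything else is elementary bookkeeping with suprema and upper limits.
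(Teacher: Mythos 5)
Your argument is correct, and it reaches the conclusion by a genuinely different route than the paper. The paper first rewrites all three quantities as infima of the monotone envelopes $\alpha(r)=\sup_{0<\rho<r}\Lip^\rho f(x)$, $\beta(r)=\sup_{0<\rho<r}\Lip^\rho_+f(x)$ and $\gamma(r)=\sup\{\varphi(u):0<|u-x|_X<r\}$, proves $\beta=\gamma$ by exchanging the order of the two suprema, and then establishes $\alpha=\beta$ by contradiction: assuming $\alpha(r_0)<\beta(r_0)$, it extracts the scaling inequality $\rho\,\Lip^\rho_+f(x)\le r\,\Lip^r f(x)$ for $0<\rho<r$ and derives the impossibility $0=\lim_{r\to r_1^+}(r-r_1)\Lip^r f(x)\ge r_1\varepsilon>0$. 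Your sandwich $\Lip f(x)\le\limsup_{r\to0^+}\Lip^r f(x)\le\limsup_{r\to0^+}\Lip^r_+f(x)\le\Lip f(x)$ replaces that contradiction with a direct sequence argument, and your radius perturbation $r_n=(1+\tfrac1n)\varrho_n$ is precisely the constructive form of the same scaling observation (a point at distance exactly $\varrho_n$ lies in the open ball of any strictly larger radius, at the cost of a factor $\varrho_n/r_n\to1$). All the individual steps check out: the existence of a sequence $u_n\to x$, $u_n\ne x$, with $\varphi(u_n)\to\Lip f(x)$ in $[0,+\infty]$ is guaranteed by non-isolatedness; the identification $\Lip f(x)=\inf_{r>0}\sup\{\varphi(u):0<|u-x|_X\le r\}$ used in the upper estimate agrees with the usual punctured-open-ball definition because the two monotone families of suprema interlace; and the passage from $\limsup_n\Lip^{r_n}f(x)$ to $\limsup_{r\to0^+}\Lip^r f(x)$ is legitimate since $r_n\to0^+$. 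Your version is shorter, avoids the boundedness digression in the paper's contradiction step, and works uniformly in the extended reals; what it gives up is the slightly stronger pointwise information the paper extracts along the way, namely that $\alpha(r)=\beta(r)=\gamma(r)$ for every fixed $r>0$, not merely in the limit.
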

\begin{proof}
    Denote 
    \begin{align*}
        \alpha(r) &= \sup_{0<\rho<r}\Lip^{\rho} f(x), \\
        \beta(r)  &= \sup_{0<\rho<r}\Lip_{+}^{\rho} f(x), \\
        \gamma(r) &= \sup\left\{\frac{\vert f(u)-f(x)\vert_Y }{\vert u - x\vert_X}\colon u\in X, \; 0<\vert u-x\vert_X<r\right\}.
    \end{align*}
    Since $B(x,r)\subseteq B[x,r]$, we have $\alpha(r)\leq\beta(r)$. Next, we have
    \begin{align*}
        \beta(r) &= \sup_{0<\rho<r}\sup_{0<\vert u-x\vert_X\leq\rho}\tfrac{1}{\rho}\vert f(u)-f(x)\vert_Y \\
                 &\leq \sup_{0<\rho<r}\sup_{0<\vert u-x\vert_X\leq\rho}\tfrac{1}{\vert u-x\vert_X}\vert f(u)-f(x)\vert_Y = \gamma(r).
    \end{align*}
    On the other hand, we have
    \begin{align*}
        \gamma(r) &= \sup_{0<\rho<r}\sup_{\vert u-x\vert_X=\rho}\tfrac{1}{\rho}\vert f(u)-f(x)\vert_Y \\
                  &\leq \sup_{0<\rho<r}\sup_{0<\vert u-x\vert_X\leq\rho}\tfrac{1}{\rho}\vert f(u)-f(x)\vert_Y \\
                  &= \sup_{0<\rho<r}\Lip_{+}^{\rho}f(x) = \beta(r).
    \end{align*}
    We have shown, that $\alpha(r)\leq\beta(r)=\gamma(r)$ for $r>0$.
    It remains to show that $\alpha(r)=\beta(r)$, $r>0$.
    Let us assume that there exists $r_0>0$, such that $\alpha(r_0)<\beta(r_0)$.
    Denote $\varphi(r)=\Lip^r f(x)$ and $\psi(r)=\Lip_{+}^r f(x)$. Observe, that
    \begin{equation}\label{eq:some_nonsense}
        \rho\varphi(\rho)\leq\rho\psi(\rho)\leq r\varphi(r), \text{ for }  0<\rho<r.
    \end{equation}
    Since $\beta(r_0)=\sup\limits_{r<r_0}\psi(r)>\alpha(r_0)$, 
    there exists $r_1<r_0$ such that $\alpha(r_0)<\psi(r_1)$.
    Let $\varepsilon=\psi(r_1)-\alpha(r_0)>0$. Then, for any $r<r_0$,
    $\varphi(r)+\varepsilon\leq\alpha(r_0)+\varepsilon=\psi(r_1)$, so
    \begin{equation}\label{eq:some_more_nonsense}
        r_1\varphi(r)+r_1\varepsilon\leq r_1\psi(r_1)\leq r\varphi(r), \text{ for } r_1<r<r_0,
    \end{equation}
    where the second inequality follows from \eqref{eq:some_nonsense}.
    Note that
    \[
        \varphi(r)=\tfrac{1}{r}r\varphi(r)\leq\tfrac{1}{r_1}r_0\varphi(r_0), \text{ for } r_1<r<r_0,
    \]
    so, the function $\varphi$ is bounded on the interval $(r_1;r_0)$.
    However, by \eqref{eq:some_more_nonsense}, we have
    \[
        0 = \lim_{r\to r_{1}^{+}}(r-r_1)\varphi(r)\geq r_1\varepsilon >0,
    \]
    which is impossible. We have $\alpha(r)=\beta(r)=\gamma(r)$ for $r>0$.
    But $$\displaystyle\limsup_{r\to 0^{+}}\Lip^r f(x)=\inf_{r>0}\alpha(r),$$
    $$\displaystyle\limsup_{r\to 0^{+}}\Lip_{+}^r f(x)=\inf_{r>0}\beta(r)$$
    and $$\Lip f(x)=\inf\limits_{r>0}\gamma(r)$$ and the proof is finished.
\end{proof}

Note, that
\begin{align}
	\label{eqn:Lip_monotone}
	\Lip_r f(x)&\le \Lip_{r'}f(x)
	\ \ \text{and}\ \ 
	\lip_r f(x)\ge \lip_{r'}f(x) \text{ if }0<r<r',
\end{align}
So, the definitions and the previous proposition yield
\begin{align}
	\label{eqn:Lip_as_limsup}\Lip f(x)&=\inf\limits_{r>0}\Lip_rf(x)=\lim\limits_{r\to0^+}\Lip_rf(x),\\
	\label{eqn:lip_as_sup_lip_r}\lip f(x)&=\sup\limits_{r>0}\lip_rf(x)=\lim\limits_{r\to0^+}\lip_rf(x).
\end{align}
Therefore, it is easy to see that the following inequalities hold.
\begin{align}
	 \label{eqn:Lip_r_estimations}\lip_r f(x)&\le \Lip_r f(x)\le\LLip^r f(x)\text{ for any }r>0,\\
	\label{eqn:Lip_estimations}\lip f(x)&\le \Lip f(x)\le\LLip f(x).
\end{align}

\begin{definition}
	Let $X$ and $Y$ be metric spaces and $\gamma\ge 0$. A function ${f\colon X\to Y}$ is called
	\begin{itemize}
		\item  \emph{$\gamma$-Lipschitz}  if $\|f\|_{\lip}\le\gamma$;
		\item \emph{Lipschitz} if $\|f\|_{\lip}<\infty$;
		\item \emph{locally Lipschitz} if $\LLip f<\infty$. 
		\item \emph{pointwise Lipschitz} if $\Lip f<\infty$;
		\item \emph{weakly pointwise Lipschitz} if $\lip f<\infty$.
	\end{itemize} 
\end{definition}

Inequalities (\ref{eqn:Lip_estimations}) yield the next assertion.

\begin{proposition}\label{prop:properties_Lip_lip}
	Let $X$ and $Y$ be metric spaces, and $f\colon X\to Y$ be a function. Then $\mathbb{L}(f)\subseteq L(f)\subseteq\ell(f)$ and $\ell^\infty(f)\subseteq L^\infty(f)\subseteq\mathbb{L}^\infty(f)$.
\end{proposition}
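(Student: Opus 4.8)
The plan is to deduce everything from the pointwise inequalities $\lip f(x)\le\Lip f(x)\le\LLip f(x)$ recorded in \eqref{eqn:Lip_estimations}, which hold at every point $x\in X$. Before doing so, I would make explicit the meaning of the symbols involved: $\ell(f)$, $L(f)$ and $\mathbb{L}(f)$ are the sets of points $x\in X$ at which $\lip f(x)$, $\Lip f(x)$ and $\LLip f(x)$, respectively, are finite, while $\ell^\infty(f)$, $L^\infty(f)$ and $\mathbb{L}^\infty(f)$ are the sets where these functions take the value $+\infty$. Since each of $\lip f$, $\Lip f$, $\LLip f$ is $[0,+\infty]$-valued, the latter three sets are precisely the complements in $X$ of the former three.

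First I would establish the chain $\mathbb{L}(f)\subseteq L(f)\subseteq\ell(f)$. Fix $x\in\mathbb{L}(f)$, so $\LLip f(x)<\infty$; then the second inequality of \eqref{eqn:Lip_estimations} gives $\Lip f(x)\le\LLip f(x)<\infty$, hence $x\in L(f)$, which proves $\mathbb{L}(f)\subseteq L(f)$. Repeating the argument with the first inequality of \eqref{eqn:Lip_estimations}: every $x\in L(f)$ satisfies $\lip f(x)\le\Lip f(x)<\infty$, so $x\in\ell(f)$, giving $L(f)\subseteq\ell(f)$.

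The second chain is just the contrapositive of the first. If $x\in\ell^\infty(f)$, i.e.\ $\lip f(x)=\infty$, then by \eqref{eqn:Lip_estimations} we get $\Lip f(x)\ge\lip f(x)=\infty$, so $x\in L^\infty(f)$; likewise $\Lip f(x)=\infty$ forces $\LLip f(x)=\infty$. Equivalently, taking complements in $X$ turns the inclusions $\mathbb{L}(f)\subseteq L(f)\subseteq\ell(f)$ into $\ell^\infty(f)\subseteq L^\infty(f)\subseteq\mathbb{L}^\infty(f)$. There is no genuine obstacle in this proof; the only point that deserves to be stated carefully is the identification of the superscript-$\infty$ sets with the set-theoretic complements of $\ell(f)$, $L(f)$, $\mathbb{L}(f)$, after which both chains are immediate consequences of \eqref{eqn:Lip_estimations}.
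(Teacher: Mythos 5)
Your proposal is correct and is exactly the paper's argument: the paper derives this proposition as an immediate consequence of the pointwise inequalities $\lip f\le\Lip f\le\LLip f$ in \eqref{eqn:Lip_estimations}, with the superscript-$\infty$ sets being the complements of the finiteness sets. Nothing further is needed.
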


\section{Connections of Lipschitz derivatives to classical notion of a derivative}

One might ask under what conditions the Lipschitz derivative (of any given type)
coincides with one of the ``traditional'' notions of the derivative
of a given function, provided that an appropriate derivative exists.
It is obvious that for a real differentiable function $f\colon\bR\to\bR$,
we have $\lip f(x)=\Lip f(x)=\abs{f(x)}$ at any $x\in\bR$.

In \cite{HerMas}, the following theorem was proved.
\begin{theorem}\label{thm:local_Lip_Frechet}
    Let $X$ and $Y$ be normed spaces, $G$ be an open subset of $X$,
    and $f\colon G\to Y$ have a locally bounded Gateaux derivative $f'$.
    Then, $\LLip f(x)=\limsup\limits_{u\to x}\norm{f'(x)}$ for $x\in X$
    and so, $f$ is locally Lipschitz. Moreover, if $f$ is $C^1$ function,
    then $\LLip f(x)=\norm{f'(x)}$, $x\in X$.
\end{theorem}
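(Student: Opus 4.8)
The plan is to prove the equality by establishing the two inequalities $\limsup_{u\to x}\norm{f'(u)}\le\LLip f(x)$ and $\LLip f(x)\le\limsup_{u\to x}\norm{f'(u)}$, using throughout that $\LLip f(x)=\inf_{r>0}\LLip^r f(x)$ and that $\limsup_{u\to x}\norm{f'(u)}$ coincides with $\inf_{r>0}\sup_{u\in B(x,r)}\norm{f'(u)}$ (the supremum over the whole ball, including $u=x$). Once the equality is in hand, local Lipschitzness follows because the latter infimum is finite for small $r$ by local boundedness of $f'$, and the $C^1$ statement is immediate since continuity of $f'$ forces $\limsup_{u\to x}\norm{f'(u)}=\norm{f'(x)}$.

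For the first inequality, fix $r>0$ small enough that $B(x,r)\subseteq G$, take any $u\in B(x,r/2)$ and any $h\in X$ with $\norm h=1$. For $0<\abs t<r/2$ the points $u$ and $u+th$ are distinct and both lie in $B(x,r)$, so $\frac1{\abs t}\norm{f(u+th)-f(u)}_Y\le\LLip^r f(x)$ directly from the definition of $\LLip^r f(x)$. Letting $t\to0$ and using the definition of the Gateaux derivative of $f$ at $u$ gives $\norm{f'(u)h}_Y\le\LLip^r f(x)$, and taking the supremum over unit $h$ yields $\norm{f'(u)}\le\LLip^r f(x)$ for every $u\in B(x,r/2)$. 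Hence $\sup_{u\in B(x,r/2)}\norm{f'(u)}\le\LLip^r f(x)$, and passing to the infimum over $r>0$ gives $\limsup_{u\to x}\norm{f'(u)}\le\LLip f(x)$.

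For the reverse inequality I would use a mean value argument based on the convexity of balls in a normed space. For small $r$ put $M_r=\sup_{u\in B(x,r)}\norm{f'(u)}$, which is finite by local boundedness. Given distinct $u,v\in B(x,r)$, the whole segment $v+t(u-v)$, $t\in[0,1]$, lies in the convex set $B(x,r)\subseteq G$, so the curve $g(t)=f(v+t(u-v))$ is differentiable on $[0,1]$ with $g'(t)=f'(v+t(u-v))(u-v)$ — this is just the definition of the Gateaux derivative of $f$ read along that segment. Choosing $\phi\in Y^*$ with $\norm\phi=1$ and $\phi(f(u)-f(v))=\norm{f(u)-f(v)}_Y$ (Hahn--Banach) and applying the classical one-variable mean value theorem to $\phi\circ g$ gives $\norm{f(u)-f(v)}_Y\le\sup_{t\in[0,1]}\norm{g'(t)}_Y\le M_r\norm{u-v}_X$. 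Therefore $\LLip^r f(x)\le M_r$, and taking the infimum over $r$ yields $\LLip f(x)\le\limsup_{u\to x}\norm{f'(u)}$.

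Putting the two inequalities together gives the formula for $\LLip f(x)$; choosing any $r$ with $M_r<\infty$ then gives $\LLip f(x)\le M_r<\infty$, so $f$ is locally Lipschitz, and the $C^1$ case follows from the continuity of $f'$. I expect the only genuinely delicate point to be the mean value step: one has to check carefully that $g$ is differentiable on all of the \emph{closed} interval $[0,1]$ with the claimed derivative (this is where convexity of $B(x,r)$ and openness of $G$ enter, guaranteeing $v+t(u-v)\in G$ for every $t\in[0,1]$), and that the bound $\norm{g'(t)}_Y\le M_r\norm{u-v}_X$ is uniform over the pair $(u,v)$. Everything else is routine bookkeeping with the monotone families $r\mapsto\LLip^r f(x)$ and $r\mapsto M_r$.
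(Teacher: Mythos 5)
The paper does not actually prove this theorem: it is quoted from \cite{HerMas}, and only the subsequent result on $\Lip f$ and the Fr\'echet derivative receives a proof here. So I can only assess your argument on its own merits. Its two-sided scheme is the standard and correct one: directional difference quotients inside $B(x,r)$ give $\sup_{u\in B(x,r/2)}\norm{f'(u)}\le\LLip^rf(x)$, and Hahn--Banach plus the one-variable mean value theorem along segments (legitimate because the ball is convex, lies in $G$, and differentiability of $\phi\circ g$ at every $t\in[0,1]$ supplies the continuity the MVT needs) gives $\LLip^rf(x)\le M_r$. You also correctly read the statement's $\norm{f'(x)}$ inside the $\limsup$ as a typo for $\norm{f'(u)}$.

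The one genuine gap is your opening assertion that $\limsup_{u\to x}\norm{f'(u)}$ ``coincides with'' $\inf_{r>0}\sup_{u\in B(x,r)}\norm{f'(u)}$ with the supremum taken over the \emph{whole} ball including $u=x$. For a general function this is false: the deleted upper limit can be strictly smaller, since $\inf_{r>0}\sup_{u\in B(x,r)}\norm{f'(u)}=\max\bigl\{\norm{f'(x)},\ \limsup_{u\to x}\norm{f'(u)}\bigr\}$. What your two inequalities actually establish is $\LLip f(x)=\inf_{r>0}\sup_{u\in B(x,r)}\norm{f'(u)}$; to reach the stated (deleted) $\limsup$ you must additionally show $\norm{f'(x)}\le\limsup_{u\to x,\,u\ne x}\norm{f'(u)}$. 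This is true, but it is a consequence of differentiability, not of notation: for a unit vector $h$ and a norming functional $\phi\in Y^*$ for $f'(x)h$, the real function $t\mapsto\phi(f(x+th))$ is differentiable near $0$, so by the mean value theorem $\norm{f'(x)h}=\lim_{t\to0^+}\phi\bigl(f'(x+\xi_t h)h\bigr)\le\limsup_{u\to x,\,u\ne x}\norm{f'(u)}$ for suitable $\xi_t\in(0,t)$; taking the supremum over $h$ closes the gap. With that paragraph inserted, your proof is complete, and the local Lipschitzness and $C^1$ conclusions follow exactly as you describe.
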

The following result was also stated in \cite{HerMas}, but the proof
contains a small blunder. Here, we provide the correct proof.

\begin{theorem}\label{thm:big_Lip_Frechet}
    Let $f\colon X\to Y$, where $X$ and $Y$ are normed space
    and assume that there exists the Fr\'{e}chet derivative
    $\frechet{f}{x_0}$ of $f$ at point $x_0\in X$.
    Then, $\Lip f(x_0)=\norm{\frechet{f}{x_0}}$.
\end{theorem}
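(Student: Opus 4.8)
The plan is to prove the two inequalities $\Lip f(x_0)\le\norm{\frechet{f}{x_0}}$ and $\Lip f(x_0)\ge\norm{\frechet{f}{x_0}}$ separately, working directly from the definition $\Lip f(x_0)=\limsup_{u\to x_0}\frac1{|u-x_0|_X}|f(u)-f(x_0)|_Y$ together with the $\varepsilon$--$\delta$ form of Fr\'echet differentiability: for every $\varepsilon>0$ there is $\delta>0$ such that $|f(x_0+v)-f(x_0)-\frechet{f}{x_0}(v)|_Y\le\varepsilon|v|_X$ whenever $0<|v|_X<\delta$. Here $L:=\frechet{f}{x_0}$ is by definition a bounded linear operator, so $\norm{L}=\sup_{|h|_X\le 1}|L(h)|_Y<\infty$; we may also assume $X\ne\{0\}$, since otherwise $x_0$ is isolated and both sides are $0$ by the standing convention.

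For the upper bound, fix $\varepsilon>0$ and the corresponding $\delta$. For any $u$ with $0<|u-x_0|_X<\delta$, writing $v=u-x_0$ and applying the triangle inequality,
\[
  \frac{|f(u)-f(x_0)|_Y}{|u-x_0|_X}\le\frac{|L(v)|_Y}{|v|_X}+\varepsilon\le\norm{L}+\varepsilon .
\]
Passing to the $\limsup$ as $u\to x_0$ gives $\Lip f(x_0)\le\norm{L}+\varepsilon$, and letting $\varepsilon\to0^+$ yields $\Lip f(x_0)\le\norm{L}$.

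For the lower bound, fix $\varepsilon>0$ and choose a unit vector $h\in X$ with $|L(h)|_Y>\norm{L}-\varepsilon$. With $\delta$ as above, for every $t\in(0,\delta)$ the point $u_t:=x_0+th$ satisfies $0<|u_t-x_0|_X=t<\delta$, so
\[
  \frac{|f(u_t)-f(x_0)|_Y}{|u_t-x_0|_X}\ge\frac{|L(th)|_Y}{t}-\varepsilon=|L(h)|_Y-\varepsilon>\norm{L}-2\varepsilon .
\]
Since $u_t\to x_0$ as $t\to0^+$, this forces $\Lip f(x_0)\ge\norm{L}-2\varepsilon$, and letting $\varepsilon\to0^+$ gives $\Lip f(x_0)\ge\norm{L}$. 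Combining the two bounds finishes the proof.

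The only delicate point is that the error estimate in the Fr\'echet definition is \emph{uniform in the direction} $v/|v|_X$; this uniformity is exactly what lets the lower-bound argument proceed when the near-optimal direction $h$ and the scale $t\to0^+$ are chosen independently. With only a Gateaux derivative, where the $o(|v|_X)$ term may depend on the direction, the inequality $\Lip f(x_0)\ge\norm{f'(x_0)}$ can fail, which is presumably the source of the ``small blunder'' mentioned above. I do not expect any further obstruction.
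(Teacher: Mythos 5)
Your proof is correct and follows essentially the same route as the paper's: the upper bound via the triangle inequality applied to the Fr\'echet remainder, and the lower bound by picking a unit vector $h$ with $\norm{\frechet{f}{x_0}(h)}>\norm{\frechet{f}{x_0}}-\varepsilon$ and letting $t\to0^+$ along the ray $x_0+th$. One small correction to your closing aside: the uniformity of the Fr\'echet remainder in the direction is needed for the \emph{upper} bound, not the lower one --- for a fixed unit direction $h$ the ratio $\norm{f(x_0+th)-f(x_0)}/t$ converges to $\norm{L(h)}$ already under a Gateaux derivative, so $\Lip f(x_0)\ge\norm{L}$ survives, whereas $\Lip f(x_0)\le\norm{L}$ is what can fail when the $o(\norm{v})$ term depends on $v/\norm{v}$.
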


\begin{proof}
    It is enough to consider the case $X\ne\{0\}$.
    Denote by $A=\frechet{x_0}{f}$ the Fr\'{e}chet derivative
    of $f$ at a point $x_0$. We have
    \begin{equation}\label{eq:A_is_Frechet}
        f(x)-f(x_0)=A(x-x_0)+\alpha(x), \text{ for } x\in X,
    \end{equation}
    where $\alpha$ is a function, such that 
    $\displaystyle\lim_{x\to x_0}\frac{\alpha(x)}{\norm{x-x_0}}=0$.
    By \eqref{eq:A_is_Frechet} we have
    \[
        \norm{f(x)-f(x_0)}\leq\norm{A}\norm{x-x_0}+\norm{\alpha(x)},
    \]
    hence
    \[
        \frac{\norm{f(x)-f(x_0)}}{\norm{x-x_0}}\leq \norm{A}+\norm{\frac{\alpha(x)}{\norm{x-x_0}}}.
    \]
    Thus,
    \begin{align*}
        \Lip f(x_0) &= \limsup_{x\to x_0}\frac{\norm{f(x)-f(x_0)}}{\norm{x-x_0}} \\
                    &\leq \lim_{x\to x_0}\left(\norm{A}+\norm{\frac{\alpha(x)}{\norm{x-x_0}}}\right) 
                    = \norm{A}.
    \end{align*}
    We want to prove the reverse inequality.
    Fix $\varepsilon>0$. Then, there exists $e\in X$ with $\norm{e}=1$ and
    such that
    \[
        \norm{Ae}\geq\norm{A}-\varepsilon.
    \]
    For any $t>0$, denote $x_t=x_0+te$. 
    Note that $t=\norm{x_t-x_0}$, and $x_t\to x_0$ when $t\to 0$.
    Therefore, by \eqref{eq:A_is_Frechet}
    \[
        f(x_t)-f(x_0)=A(x_t-x_0)+\alpha(x_t)=tAe+\alpha(x_t),
    \]
    so
    \begin{equation*}
        \frac{f(x_t)-f(x_0)}{t}=Ae+\frac{\alpha(x_t)}{t},
    \end{equation*}
    and
    \[
        \frac{f(x_t)-f(x_0)}{\norm{x_t-x_0}}=Ae+\frac{\alpha(x_t)}{\norm{x_t-x_0}}.
    \]
    Hence, we have
    \begin{align*}
        \frac{\norm{f(x_t)-f(x_0)}}{\norm{x_t-x_0}} &\geq \norm{Ae}-\norm{\frac{\alpha(x_t)}{\norm{x_t-x_0}}} \\
        &\geq \norm{A}-\varepsilon-\norm{\frac{\alpha(x_t)}{\norm{x_t-x_0}}}\xrightarrow{t\to 0}\norm{A}-\varepsilon.
    \end{align*}
    Thus,
    \begin{align*}
        \Lip f(x_0) &=\limsup_{x\to x_0}\frac{\norm{f(x)-f(x_0)}}{\norm{x-x_0}} 
        = \inf_{r>0}\sup_{\norm{x-x_0}<r}\frac{\norm{f(x)-f(x_0)}}{\norm{x-x_0}} \\
        &\geq \inf_{r>0}\sup_{\norm{x_t-x_0}<r}\frac{\norm{f(x_t)-f(x_0)}}{\norm{x_t-x_0}}
        \geq \norm{A}-\varepsilon-\lim_{t\to 0}\norm{\frac{\alpha(x_t)}{\norm{x_t-x_0}}} \\
        &= \norm{A}-\varepsilon.
    \end{align*}
    Since the $\varepsilon$ was chosen arbitrarily, the proof is finished.
\end{proof}

\section{Semicontinuity with respect to a family of sets}

In this section we introduce some modification of semicontinuity, 
which will help us to classify the Lipschitz derivatives.

Let $X$ be a topological space and $\mathcal{A}$ be a family of subset of $X$. We denote
\begin{equation*}
\begin{split} 
	\mathcal{A}_{c}&=\Big\{X\setminus A\colon A\in\mathcal{A}\Big\},\\
	\mathcal{A}_\sigma&=\Big\{\bigcup_{n=1}^\infty A_n\colon A_n\in\mathcal{A}\text{ for all }n\in\mathbb{N}\Big\},\\
	\mathcal{A}_\delta&=\Big\{\bigcap_{n=1}^\infty A_n\colon A_n\in\mathcal{A}\text{ for all }n\in\mathbb{N}\Big\},
\end{split}
\end{equation*}
We will also combine this symbols. 
It is easy to check, for example, that $\mathcal{A}_{c\delta c}=\mathcal{A}_{\sigma}$, $\mathcal{A}_{\sigma c}=\mathcal{A}_{c \delta}$, $\mathcal{A}_{\delta c}=\mathcal{A}_{c\sigma}$ and so on.
If $\mathcal{T}$ denotes the topology of $X$,
then applying above notation to the family $\mathcal{A}=\mathcal{T}$,
the $\mathcal{T}_{\delta}$ is the familiar Borel class $\mathcal{G}_{\delta}$
of $G_{\delta}$-subsets of $X$. Complementary, the family $\mathcal{T}_{c\sigma}$
is the Borel class $\mathcal{F}_{\sigma}$ of $F_{\sigma}$-subsets of $X$.

We say that  $f\colon X\to\overline{\mathbb{R}}$ is an \textit{$\mathcal{A}$-upper 
($\mathcal{A}$-lower) semicontinuous function} if 
$f^{-1}\big([-\infty;\gamma)\big)\in\mathcal{A}$ 
(resp. $f^{-1}\big((\gamma,+\infty]\big)\in\mathcal{A}$) for any
$\gamma\in\mathbb{R}$. If $\mathcal{A}=\mathcal{T}$ is the topology of $X$, 
then we omit the symbol $\mathcal{A}$ in the previous definitions. 
For our purposes, the $\mathcal{F}_{\sigma}$-upper and lower
semicontinuous functions are particularly important.

\begin{proposition}\label{prop:duality_A-upper_semicontinuity}
Let $X$ be a topological space, $\mathcal{A}\subseteq 2^X$ and
$f\colon X\to\overline{\mathbb{R}}$ be an $\mathcal{A}$-upper
semicontinuous function. Then
\begin{itemize}
\item[$(i)$] $f^{-1}\big[[\gamma,+\infty]\big]\in\mathcal{A}_{c}$  for any $\gamma\in \mathbb{R}$;
\item[$(ii)$] $f^{-1}\big[[-\infty,+\infty)\big]\in\mathcal{A}_{\sigma}$
and, so, $f^{-1}\big[\{+\infty\}\big]\in\mathcal{A}_{\sigma c}$;
\item[$(iii)$] $f^{-1}\big[\{-\infty\}\big]\in\mathcal{A}_{\delta}$ and, so, $f^{-1}\big((-\infty,+\infty]\big)\in\mathcal{A}_{\delta c}$;
\item[$(iv)$] $f$ is $\mathcal{A}_{c\sigma}$-lower semicontinuous
\end{itemize} 
\end{proposition}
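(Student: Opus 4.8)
The plan is to obtain all four items directly from the single hypothesis $f^{-1}\big[[-\infty,\gamma)\big]\in\mathcal{A}$ (valid for every $\gamma\in\mathbb{R}$) by nothing more than taking complements and countable unions or intersections of preimages. No structure on $X$ and no closure property of $\mathcal{A}$ is needed, so the argument is purely set-theoretic and rests only on the bookkeeping conventions for $\mathcal{A}_c$, $\mathcal{A}_\sigma$, $\mathcal{A}_\delta$ introduced above, together with the observation that $g^{-1}$ commutes with arbitrary unions, intersections and complements.

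First, for $(i)$ I would note $f^{-1}\big[[\gamma,+\infty]\big]=X\setminus f^{-1}\big[[-\infty,\gamma)\big]$, and the complement of a member of $\mathcal{A}$ lies in $\mathcal{A}_c$ by the very definition of $\mathcal{A}_c$. For $(ii)$, decompose $[-\infty,+\infty)=\bigcup_{n=1}^\infty[-\infty,n)$, so that $f^{-1}\big[[-\infty,+\infty)\big]=\bigcup_{n=1}^\infty f^{-1}\big[[-\infty,n)\big]$ is a countable union of members of $\mathcal{A}$, hence in $\mathcal{A}_\sigma$; since $f^{-1}\big[\{+\infty\}\big]$ is its complement in $X$, it lies in $\mathcal{A}_{\sigma c}$. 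Dually, for $(iii)$ I would write $\{-\infty\}=\bigcap_{n=1}^\infty[-\infty,-n)$, whence $f^{-1}\big[\{-\infty\}\big]=\bigcap_{n=1}^\infty f^{-1}\big[[-\infty,-n)\big]\in\mathcal{A}_\delta$, and its complement $f^{-1}\big((-\infty,+\infty]\big)$ lies in $\mathcal{A}_{\delta c}$.

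Finally, for $(iv)$, fix $\gamma\in\mathbb{R}$ and use the decomposition $(\gamma,+\infty]=\bigcup_{n=1}^\infty[\gamma+\tfrac1n,+\infty]$; then $f^{-1}\big((\gamma,+\infty]\big)=\bigcup_{n=1}^\infty f^{-1}\big[[\gamma+\tfrac1n,+\infty]\big]$, and by $(i)$ each term of this union belongs to $\mathcal{A}_c$, so the union belongs to $\mathcal{A}_{c\sigma}$, which is exactly the assertion that $f$ is $\mathcal{A}_{c\sigma}$-lower semicontinuous. I do not expect any genuine obstacle in this proof; the only point demanding care is that $\mathcal{A}$ is not assumed to be stable under complements or under countable operations, so at each step one must be careful to record the conclusion in the correct decorated family ($\mathcal{A}_c$, $\mathcal{A}_\sigma$, $\mathcal{A}_{\sigma c}$, $\mathcal{A}_\delta$, $\mathcal{A}_{\delta c}$, $\mathcal{A}_{c\sigma}$) rather than in $\mathcal{A}$ itself, and to invoke the identities such as $\mathcal{A}_{c\delta c}=\mathcal{A}_\sigma$ only where they are actually justified.
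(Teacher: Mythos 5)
Your proposal is correct and follows essentially the same route as the paper's proof: item by item it uses the identical complement for $(i)$, the decompositions $[-\infty,+\infty)=\bigcup_n[-\infty,n)$ and $\{-\infty\}=\bigcap_n[-\infty,-n)$ for $(ii)$ and $(iii)$, and a sequence decreasing to $\gamma$ (your explicit $\gamma+\tfrac1n$ versus the paper's generic $\gamma_n\downarrow\gamma$) combined with $(i)$ for $(iv)$. No gaps.
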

\begin{proof}
	$(i)$ For any $\gamma\in\mathbb{R}$ we have that $f^{-1}\big([-\infty,\gamma)\big)\in\mathcal{A}$ and then $$f^{-1}\big([\gamma,+\infty]\big)=X\setminus f^{-1}\big([-\infty,\gamma)\big)\in\mathcal{A}_c.$$
	
	$(ii)$ Since $f^{-1}\big[[-\infty,n)\big]\in\mathcal{A}$ for any $n\in\mathbb{N}$, 
    we conclude that 
    \[
        f^{-1}\big[\coinv{-\infty}{+\infty}\big]=\bigcup\limits_{n=1}^\infty f^{-1}\big[\coinv{-\infty}{n}\big]\in\mathcal{A}_{\sigma},
    \]
    and so, 
    $f^{-1}\big[\{+\infty\}\big]=X\setminus f^{-1}\big[\coinv{-\infty}{+\infty}\big] \in\mathcal{A}_{\sigma c}$.
	
	$(iii)$ Since $f^{-1}\big[\coinv{-\infty}{-n}\big]\in\mathcal{A}$ for any $n\in\mathbb{N}$, we have that 
    \[
        f^{-1}\big[\{-\infty\}\big]=\bigcap\limits_{n=1}^\infty f^{-1}\big[\coinv{-\infty}{-n}\big]\in\mathcal{A}_{\delta},
    \]
    and so,
	$f^{-1}\big[\ocinv{-\infty}{+\infty}\big]=X\setminus f^{-1}\big[\{-\infty\}\big] \in\mathcal{A}_{\delta c}$
	
	$(iv)$ Let $\gamma\in\mathbb{R}$ and $\gamma_n\downarrow \gamma$. 
    Since $f^{-1}\big[[\gamma_n;+\infty]\big]\in \mathcal{A}_c$ by $(i)$,
    we conclude that 
    \[
        f^{-1}\big[(\gamma;+\infty]\big]=\bigcup\limits_{n=1}^\infty f^{-1}\big[[\gamma_n;+\infty]\big]\in\mathcal{A}_{c\sigma}
    \]
	i.e. $f$ is $\mathcal{A}_{c\sigma}$-lower semicontinuous.
\end{proof}

\begin{proposition}\label{prop:sup_of_A-upper_semicontinuous}
	Let $X$ be a topological space, $\mathcal{A}\subseteq 2^X$, 
    $f_n\colon X\to\overline{\mathbb R}$ be an $\mathcal{A}$-upper
    semicontinuous function for any $n\in\mathbb{N}$ and 
    $f\colon X\to\overline{\mathbb{R}}$ be a function such that 
    $f(x)=\sup\limits_{n\in\mathbb{N}}f_n(x)$  for any $x\in X$. 
    Then $f$ is an $\mathcal{A}_{c\sigma}$-lower  semicontinuous function.
\end{proposition}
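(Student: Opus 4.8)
The plan is to reduce everything to a single set identity and then invoke part $(iv)$ of Proposition~\ref{prop:duality_A-upper_semicontinuity} together with the (easily checked) fact that the class $\mathcal{A}_{c\sigma}$ is closed under countable unions. So fix an arbitrary $\gamma\in\mathbb{R}$; the goal is to show $f^{-1}\big(\ocinv{\gamma}{+\infty}\big)\in\mathcal{A}_{c\sigma}$.

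First I would unwind the definition of the pointwise supremum. For $x\in X$ we have $f(x)>\gamma$ exactly when $\sup_{n\in\mathbb{N}}f_n(x)>\gamma$, and this holds if and only if $f_n(x)>\gamma$ for at least one $n\in\mathbb{N}$; this equivalence remains valid for values in $\overline{\mathbb{R}}$ and $\gamma\in\mathbb{R}$, since the only way a supremum can strictly exceed a finite number is for some term to do so. Hence
\[
    f^{-1}\big(\ocinv{\gamma}{+\infty}\big)=\bigcup_{n=1}^\infty f_n^{-1}\big(\ocinv{\gamma}{+\infty}\big).
\]

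Next, since each $f_n$ is $\mathcal{A}$-upper semicontinuous, part $(iv)$ of Proposition~\ref{prop:duality_A-upper_semicontinuity} gives that $f_n$ is $\mathcal{A}_{c\sigma}$-lower semicontinuous, i.e. $f_n^{-1}\big(\ocinv{\gamma}{+\infty}\big)\in\mathcal{A}_{c\sigma}$ for every $n$. (If one prefers to keep the argument self-contained, pick any sequence $\gamma_k\downarrow\gamma$ with $\gamma_k>\gamma$ and write $f_n^{-1}\big(\ocinv{\gamma}{+\infty}\big)=\bigcup_{k=1}^\infty f_n^{-1}\big[\cinv{\gamma_k}{+\infty}\big]$, each set on the right being the complement of $f_n^{-1}\big(\coinv{-\infty}{\gamma_k}\big)\in\mathcal{A}$ and thus a member of $\mathcal{A}_c$.)

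Finally I would combine the two observations: $f^{-1}\big(\ocinv{\gamma}{+\infty}\big)$ is a countable union of sets each of which is itself a countable union of complements of members of $\mathcal{A}$; re-indexing this doubly-indexed family as a single countable family of such complements shows $f^{-1}\big(\ocinv{\gamma}{+\infty}\big)\in\mathcal{A}_{c\sigma}$. Since $\gamma$ was arbitrary, $f$ is $\mathcal{A}_{c\sigma}$-lower semicontinuous. There is no genuine obstacle here; the only point worth a line of comment is the stability of $\mathcal{A}_{c\sigma}$ under countable unions (so that the ``union of unions'' collapses), which is immediate from the definition of the $\sigma$-operation.
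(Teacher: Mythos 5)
Your proof is correct and follows essentially the same route as the paper's: fix $\gamma$, write $f^{-1}\big(\ocinv{\gamma}{+\infty}\big)$ as the countable union of the sets $f_n^{-1}\big(\ocinv{\gamma}{+\infty}\big)$, and invoke Proposition~\ref{prop:duality_A-upper_semicontinuity}$(iv)$ together with the closure of $\mathcal{A}_{c\sigma}$ under countable unions. The only difference is that you spell out the latter closure property and offer a self-contained unwinding of $(iv)$, which the paper leaves implicit.
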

\begin{proof}
	 Consider $\gamma\in\mathbb{R}$. 
     By Proposition~\ref{prop:duality_A-upper_semicontinuity}$(iv)$ the functions $f_{n}$ are
     $\mathcal{A}_{c\sigma}$-lower semicontinuous.
     So, $f_n^{-1}\big[\ocinv{\gamma}{+\infty}\big]\in\mathcal{A}_{c\sigma}$ for any $n\in\mathbb{N}$.
     Consequently,
	\[
        f^{-1}\big[\ocinv{\gamma}{+\infty}\big]=\bigcup\limits_{n=1}^\infty f_n^{-1}\big[\ocinv{\gamma}{+\infty}\big]\in\mathcal{A}_{c\sigma}.
    \]
    Thus, $f$ is an $\mathcal{A}_{c\sigma}$\nobreakdash-lower  semicontinuous function.
\end{proof}

Observe that $f$ is an $\mathcal{A}$-upper semicontinuous function 
if and only if $-f$ is $\mathcal{A}$-lower semicontinuous. Therefore, 
using Proposition~\ref{prop:duality_A-upper_semicontinuity} and \ref{prop:sup_of_A-upper_semicontinuous}
with $g=-f$ we obtain the following two propositions.

\begin{proposition}\label{prop:duality_A-lower_semicontinuity}
	Let $X$ be a topological space, $\mathcal{A}\subseteq 2^X$ and $f\colon X\to\overline{\mathbb{R}}$ be an $\mathcal{A}$-lower semicontinuous function. Then
	\begin{itemize}
		\item[$(i)$] $f^{-1}\big[\cinv{-\infty}{\gamma}\big]\in\mathcal{A}_{c}$  for any $\gamma\in \mathbb{R}$;
		\item[$(ii)$] $f^{-1}\big[\ocinv{-\infty}{+\infty}\big]\in\mathcal{A}_{\sigma}$
		and, so, $f^{-1}\big[\{-\infty\}\big]\in\mathcal{A}_{\sigma c}$;
		\item[$(iii)$] $f^{-1}\big[\{+\infty\}\big]\in\mathcal{A}_{\delta}$ and, so, 
            $f^{-1}\big[\coinv{-\infty}{+\infty}\big]\in\mathcal{A}_{\delta c}$;
		\item[$(iv)$] $f$ is $\mathcal{A}_{c\sigma}$-upper semicontinuous.
	\end{itemize} 
\end{proposition}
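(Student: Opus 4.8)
The plan is to obtain everything by applying Proposition~\ref{prop:duality_A-upper_semicontinuity} to the function $g:=-f$, exactly as the sentence preceding the statement suggests. First I would record the trivial set-theoretic identity
\[
    (-h)^{-1}[S]=h^{-1}[-S],\qquad\text{where }-S=\{-s\colon s\in S\},
\]
valid for any $h\colon X\to\overline{\mathbb{R}}$ and any $S\subseteq\overline{\mathbb{R}}$, together with the concrete negations $-[-\infty,\gamma)=(-\gamma,+\infty]$, $-[\gamma,+\infty]=[-\infty,-\gamma]$, $-[-\infty,+\infty)=(-\infty,+\infty]$, $-(-\infty,+\infty]=[-\infty,+\infty)$, $-\{+\infty\}=\{-\infty\}$ and $-\{-\infty\}=\{+\infty\}$. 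From the first of these, $g^{-1}\big[[-\infty,\gamma)\big]=f^{-1}\big[(-\gamma,+\infty]\big]$ for every $\gamma\in\mathbb{R}$, and since $-\gamma$ ranges over all of $\mathbb{R}$ as $\gamma$ does, this shows that $f$ is $\mathcal{A}$-lower semicontinuous if and only if $g$ is $\mathcal{A}$-upper semicontinuous. Hence Proposition~\ref{prop:duality_A-upper_semicontinuity} is applicable to $g$.

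Then I would simply transcribe the four conclusions. For $(i)$: Proposition~\ref{prop:duality_A-upper_semicontinuity}$(i)$ gives $g^{-1}\big[[\gamma,+\infty]\big]\in\mathcal{A}_{c}$ for all $\gamma\in\mathbb{R}$, and $g^{-1}\big[[\gamma,+\infty]\big]=f^{-1}\big[[-\infty,-\gamma]\big]$; letting $\gamma$ run over $\mathbb{R}$ yields $f^{-1}\big[[-\infty,\gamma]\big]\in\mathcal{A}_{c}$. For $(ii)$: from $g^{-1}\big[[-\infty,+\infty)\big]\in\mathcal{A}_{\sigma}$ and $g^{-1}\big[\{+\infty\}\big]\in\mathcal{A}_{\sigma c}$, together with $g^{-1}\big[[-\infty,+\infty)\big]=f^{-1}\big[(-\infty,+\infty]\big]$ and $g^{-1}\big[\{+\infty\}\big]=f^{-1}\big[\{-\infty\}\big]$, we obtain the two statements of $(ii)$. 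For $(iii)$: from $g^{-1}\big[\{-\infty\}\big]\in\mathcal{A}_{\delta}$ and $g^{-1}\big[(-\infty,+\infty]\big]\in\mathcal{A}_{\delta c}$, using $g^{-1}\big[\{-\infty\}\big]=f^{-1}\big[\{+\infty\}\big]$ and $g^{-1}\big[(-\infty,+\infty]\big]=f^{-1}\big[[-\infty,+\infty)\big]$, we obtain $(iii)$. For $(iv)$: Proposition~\ref{prop:duality_A-upper_semicontinuity}$(iv)$ says $g$ is $\mathcal{A}_{c\sigma}$-lower semicontinuous, and by the observation recorded just before the statement (applied with the family $\mathcal{A}_{c\sigma}$ in place of $\mathcal{A}$, to the function $g$, whose negation is $f$) this is precisely the assertion that $f$ is $\mathcal{A}_{c\sigma}$-upper semicontinuous.

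There is essentially no obstacle here; the only point demanding care is the bookkeeping of signs, namely keeping track of which endpoint becomes open and which becomes closed after negating an interval, and remembering that a statement holding ``for all $\gamma\in\mathbb{R}$'' about $-\gamma$ is equivalent to the same statement about $\gamma$. Everything else is a verbatim dualization of Proposition~\ref{prop:duality_A-upper_semicontinuity}.
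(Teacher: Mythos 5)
Your proof is correct and follows exactly the route the paper intends: the paper derives this proposition in one sentence by applying Proposition~\ref{prop:duality_A-upper_semicontinuity} to $g=-f$, and your write-up simply fills in the sign-and-endpoint bookkeeping of that dualization. All the set identities and the equivalence between $\mathcal{A}$-lower semicontinuity of $f$ and $\mathcal{A}$-upper semicontinuity of $-f$ check out.
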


\begin{proposition}\label{prop:inf_of_A-lower_semicontinuous}
	Let $X$ be a topological space, 
    $\mathcal{A}\subseteq 2^X$, $f_n\colon X\to\overline{\mathbb R}$ be 
    an $\mathcal{A}$-lower semicontinuous function for any $n\in\mathbb{N}$ 
    and $f\colon X\to\overline{\mathbb{R}}$ be a function such that 
    $f(x)=\inf\limits_{n\in\mathbb{N}}f_n(x)$  for any $x\in X$. 
    Then $f$ is an $\mathcal{A}_{c\sigma}$-upper semicontinuous function.
\end{proposition}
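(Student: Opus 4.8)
The plan is to deduce Proposition~\ref{prop:inf_of_A-lower_semicontinuous} from Proposition~\ref{prop:sup_of_A-upper_semicontinuous} by the elementary duality between upper and lower semicontinuity, exactly as announced in the paragraph preceding the statement. First I would put $g_n=-f_n$ for every $n\in\mathbb{N}$. Since $f_n$ is $\mathcal{A}$-lower semicontinuous, the set identity $g_n^{-1}\big([-\infty,\gamma)\big)=f_n^{-1}\big((-\gamma,+\infty]\big)$ (valid for each $\gamma\in\mathbb{R}$, and $-\gamma$ ranges over all of $\mathbb{R}$ as $\gamma$ does) shows at once that each $g_n$ is $\mathcal{A}$-upper semicontinuous.

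Next I would use that in $\overline{\mathbb{R}}$ one has $-\inf_{n}a_n=\sup_n(-a_n)$, so the function $g:=-f$ satisfies $g(x)=\sup_{n\in\mathbb{N}}g_n(x)$ for every $x\in X$. Applying Proposition~\ref{prop:sup_of_A-upper_semicontinuous} to the family $(g_n)_{n\in\mathbb{N}}$ then gives that $g$ is $\mathcal{A}_{c\sigma}$-lower semicontinuous. Finally, since $f=-g$, the same passage to the negative (now reading $f^{-1}\big([-\infty,\gamma)\big)=g^{-1}\big((-\gamma,+\infty]\big)$, with $\mathcal{A}_{c\sigma}$ in the role of $\mathcal{A}$) yields that $f$ is $\mathcal{A}_{c\sigma}$-upper semicontinuous, which is the assertion.

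For completeness I would also note the direct route, which avoids introducing $-f$: for $\gamma\in\mathbb{R}$ one has $f^{-1}\big([-\infty,\gamma)\big)=\bigcup_{n=1}^{\infty}f_n^{-1}\big([-\infty,\gamma)\big)$, because $\inf_n f_n(x)<\gamma$ holds precisely when $f_n(x)<\gamma$ for some $n$; by Proposition~\ref{prop:duality_A-lower_semicontinuity}$(iv)$ each $f_n$ is $\mathcal{A}_{c\sigma}$-upper semicontinuous, so $f_n^{-1}\big([-\infty,\gamma)\big)\in\mathcal{A}_{c\sigma}$, and a countable union of members of $\mathcal{A}_{c\sigma}$ lies again in $\mathcal{A}_{c\sigma}$ since $\mathcal{A}_{c\sigma\sigma}=\mathcal{A}_{c\sigma}$. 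There is no genuine obstacle here: the whole content sits in the two already-established propositions. The only point deserving a moment's attention is the behaviour of the infimum under a strict inequality in $\overline{\mathbb{R}}$ — namely that $\inf_n f_n(x)<\gamma$ need not make any individual $f_n(x)$ finite, though this does not disturb the set identity used above.
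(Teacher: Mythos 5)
Your proof is correct and follows exactly the route the paper itself indicates: the paper derives Proposition~\ref{prop:inf_of_A-lower_semicontinuous} from Proposition~\ref{prop:sup_of_A-upper_semicontinuous} via the duality $g=-f$, which is precisely your main argument. The direct route you sketch at the end is also sound, but it adds nothing essential beyond the duality already established.
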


\section{Classification of the Lipschitz derivatives}

Now we pass to the investigation of the type of semicontinuity of Lipschitz derivatives of continuous functions. 
In \cite{BuHaRmZu} semicontinuity of Lipschitz derivatives of a continuous function
$f\colon \mathbb{R}\to\mathbb{R}$ was obtained from the continuity of $\Lip^rf$. 
But in the general situation, this function need not be continuous. 
Therefore, we prove semicontinuity of Lipschitz derivatives directly by the definitions.

\begin{lemma}\label{lem:lip_r_upperSC}
	Let $X$ and $Y$ be metric spaces, $f\colon X\to Y$ be a continuous function and $r>0$. 
    Then $\lip_r f\colon X\to\cinv{0}{+\infty}$ is an upper semicontinuous function.
\end{lemma}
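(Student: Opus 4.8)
The plan is to check upper semicontinuity straight from the definition: for every $\gamma\in\mathbb{R}$ the set $\{x\in X\colon\lip_r f(x)<\gamma\}$ should be open. Since $\lip_r f\ge 0$, this set is empty, hence open, whenever $\gamma\le 0$, so I would fix $\gamma>0$ and a point $x_0$ with $\lip_r f(x_0)<\gamma$ and look for a radius $\eta>0$ such that $\lip_r f(x)<\gamma$ for every $x\in B(x_0,\eta)$.

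First I would unwind the infimum in $\lip_r f(x_0)=\inf_{0<\rho<r}\Lip^\rho f(x_0)$: there is a single $\rho\in(0,r)$ with $c:=\Lip^\rho f(x_0)<\gamma$, and by the definition of $\Lip^\rho$ this gives the uniform bound $|f(u)-f(x_0)|_Y\le c\rho$ for all $u\in B(x_0,\rho)$. The key geometric remark is that a ball of slightly smaller radius $\rho'<\rho$ around a point $x$ close to $x_0$ stays inside $B(x_0,\rho)$: if $|x-x_0|_X<\eta$ with $\rho'+\eta\le\rho$, then $B(x,\rho')\subseteq B(x_0,\rho)$. For such $x$ and any $u\in B(x,\rho')$, the triangle inequality in $Y$ yields
\[
    |f(u)-f(x)|_Y\le|f(u)-f(x_0)|_Y+|f(x_0)-f(x)|_Y\le c\rho+|f(x)-f(x_0)|_Y,
\]
and this is exactly where continuity of $f$ at $x_0$ comes in: it lets me force the last term to be as small as I wish by shrinking $\eta$.

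The remaining work is just choosing the constants in the correct order. Since $c<\gamma$ and $c\rho/\rho'\to c$ as $\rho'\to\rho^-$, I would first select $\rho'\in(0,\rho)$ with $c\rho/\rho'<\gamma$, then $\varepsilon>0$ with $(c\rho+\varepsilon)/\rho'<\gamma$, and only afterwards use continuity of $f$ at $x_0$ to obtain $\eta_1>0$ with $|f(x)-f(x_0)|_Y<\varepsilon$ whenever $|x-x_0|_X<\eta_1$; finally set $\eta=\min\{\eta_1,\rho-\rho'\}>0$. Then for every $x\in B(x_0,\eta)$ and every $u\in B(x,\rho')\subseteq B(x_0,\rho)$ the displayed estimate gives $|f(u)-f(x)|_Y\le c\rho+\varepsilon$, hence
\[
    \Lip^{\rho'}f(x)=\sup_{u\in B(x,\rho')}\tfrac1{\rho'}\,|f(u)-f(x)|_Y\le\frac{c\rho+\varepsilon}{\rho'}<\gamma,
\]
and since $\rho'<r$ it follows that $\lip_r f(x)\le\Lip^{\rho'}f(x)<\gamma$. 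Thus $B(x_0,\eta)$ witnesses the openness of $\{x\colon\lip_r f(x)<\gamma\}$, and $\lip_r f$ is upper semicontinuous.

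I do not expect a genuine difficulty here; the only point needing care is the order of quantifiers in the last step — $\rho'$ (and $\varepsilon$) must be fixed before $\eta$, because both the inclusion $B(x,\rho')\subseteq B(x_0,\rho)$ and the bound $c\rho+\varepsilon$ have to be uniform over $x\in B(x_0,\eta)$. Beyond that, the argument uses nothing more than the continuity of $f$ at $x_0$, the triangle inequality in $Y$, and elementary arithmetic with the radii.
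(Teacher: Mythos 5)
Your proof is correct and follows essentially the same route as the paper's: pick $\rho<r$ with $\Lip^{\rho}f(x_0)<\gamma$, pass to a slightly smaller radius $\rho'<\rho$ so that $B(x,\rho')\subseteq B(x_0,\rho)$ for $x$ near $x_0$, and use continuity of $f$ at $x_0$ together with the triangle inequality to bound $\Lip^{\rho'}f(x)$ below $\gamma$. The only differences are cosmetic (the paper parametrizes the slack via an intermediate $\gamma_1$ and the relation $\gamma\varrho_1>\gamma_1\varrho$ rather than via your $\varepsilon$), and your ordering of the quantifiers is handled correctly.
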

\begin{proof}
	Let $x_0\in X$ and $\gamma>\lip_r f(x_0)$.  Then
	$$
	\inf_{\varrho<r}\Lip^\varrho f(x_0)=\lip_r f(x_0)<\gamma.
	$$
	So, there is positive $\varrho<r$ such that $\Lip^\varrho f(x_0)<\gamma$. Pick  $\gamma_1$ such that $\Lip^\varrho f(x_0)<\gamma_1<\gamma$. Then we choose $\varrho_1$ such that $\frac{\gamma_1}{\gamma}\varrho<\varrho_1<\varrho$. So, $\gamma \varrho_1>\gamma_1\varrho$.
	Therefore,
	$$
	\sup\limits_{u\in B(x_0,\varrho)}\big|f(u)-f(x_0)\big|_Y=\varrho\Lip^\varrho f(x_0)<\gamma_1 \varrho.
	$$
	Then 
	$$
	\big|f(u)-f(x_0)\big|_Y<\gamma_1 \varrho\ \ \ \text{ for any }\ \ \  u\in  B(x_0,\varrho).
	$$
	By the continuity of $f$ at $x_0$ there exists $\delta>0$ such that $\varrho_1+\delta<\varrho$ and $$\big|f(x)-f(x_0)\big|_X<\gamma \varrho_1-\gamma_1 \varrho\ \ \ \text{ for any }\ \ \ x\in U=B(x_0,\delta).$$
	Consider $x\in U$ and $u\in B(x,\varrho_1)$. 
	Then
	$$|u-x_0|_X\le |u-x|_X+|x-x_0|_X<\varrho_1+\delta<\varrho,$$
	and so, $u\in B(x_0,\varrho)$. Therefore,
	$$
	\big|f(u)-f(x)\big|_Y\le \big|f(u)-f(x_0)\big|_Y+\big|f(x_0)-f(x)\big|_Y<\gamma_1\varrho+(\gamma \varrho_1-\gamma_1 \varrho)=\gamma \varrho_1.
	$$
	Thus, $\frac1{\varrho_1}\big|f(u)-f(x)\big|_Y\le \gamma$ for any $u\in B(x,\varrho_1)$.
	Hence, $\Lip^{\varrho_1}f(x)\le\gamma$. But $0<\varrho_1<r$. Therefore, $\lip_r f(x)\le\gamma$ for any $x\in U$. Thus, $\lip_r f$ is upper semicontinuous at $x_0$.
\end{proof}
\begin{theorem}\label{thm:lip_Fs_lowerSC}
	Let $X$ and $Y$ be metric space, $f\colon X\to Y$ be a continuous function. 
    Then $\lip f\colon X\to\cinv{0}{+\infty}$ is a 
    $\mathcal{F}_\sigma$-lower  semicontinuous function.
\end{theorem}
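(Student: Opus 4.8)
The plan is to combine the preceding Lemma~\ref{lem:lip_r_upperSC} with the representation \eqref{eqn:lip_as_sup_lip_r} and the abstract semicontinuity machinery of Section~4. Recall that by \eqref{eqn:lip_as_sup_lip_r} we have $\lip f(x)=\sup_{r>0}\lip_r f(x)$, and by the monotonicity \eqref{eqn:Lip_monotone} this supremum is attained along any sequence $r_n\downarrow 0$; in particular $\lip f(x)=\sup_{n\in\mathbb{N}}\lip_{1/n} f(x)$ for every $x\in X$. Thus $\lip f$ is a countable pointwise supremum of the functions $\lip_{1/n} f$.

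First I would invoke Lemma~\ref{lem:lip_r_upperSC} with $r=1/n$ to conclude that each $\lip_{1/n} f\colon X\to\cinv{0}{+\infty}$ is upper semicontinuous, i.e. $\mathcal{T}$-upper semicontinuous where $\mathcal{T}$ is the topology of $X$. Then I would apply Proposition~\ref{prop:sup_of_A-upper_semicontinuous} with $\mathcal{A}=\mathcal{T}$ and $f_n=\lip_{1/n} f$: since $\lip f=\sup_n f_n$ pointwise, the proposition yields that $\lip f$ is $\mathcal{T}_{c\sigma}$-lower semicontinuous. Finally, as observed in Section~4, $\mathcal{T}_{c\sigma}$ is precisely the Borel class $\mathcal{F}_\sigma$, so $\lip f$ is $\mathcal{F}_\sigma$-lower semicontinuous, which is the assertion.

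There is essentially no obstacle here — the theorem is a clean corollary, and all the real work was done in Lemma~\ref{lem:lip_r_upperSC} and the abstract propositions. The only small points to check are that the reduction from $\sup_{r>0}$ to a countable supremum $\sup_n \lip_{1/n}f$ is legitimate (this is immediate from \eqref{eqn:Lip_monotone}, since $r\mapsto\lip_r f(x)$ is nonincreasing in $r$, so the sup over $r>0$ equals the sup over the cofinal sequence $1/n\downarrow 0$), and that the values lie in $\cinv{0}{+\infty}$ so that the propositions of Section~4, stated for $\overline{\mathbb{R}}$-valued functions, apply verbatim. I would write the proof in just a few lines along exactly these lines.

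\begin{proof}
	By \eqref{eqn:Lip_monotone} the function $r\mapsto\lip_r f(x)$ is nonincreasing, so \eqref{eqn:lip_as_sup_lip_r} gives
	\[
		\lip f(x)=\sup_{r>0}\lip_r f(x)=\sup_{n\in\mathbb{N}}\lip_{1/n} f(x)\qquad\text{for every }x\in X.
	\]
	By Lemma~\ref{lem:lip_r_upperSC}, each function $\lip_{1/n} f\colon X\to\cinv{0}{+\infty}$ is upper semicontinuous, that is, $\mathcal{T}$-upper semicontinuous, where $\mathcal{T}$ is the topology of $X$. Applying Proposition~\ref{prop:sup_of_A-upper_semicontinuous} with $\mathcal{A}=\mathcal{T}$ and $f_n=\lip_{1/n} f$, we conclude that $\lip f$ is a $\mathcal{T}_{c\sigma}$-lower semicontinuous function. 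Since $\mathcal{T}_{c\sigma}$ is the Borel class $\mathcal{F}_\sigma$, the function $\lip f$ is $\mathcal{F}_\sigma$-lower semicontinuous.
\end{proof}
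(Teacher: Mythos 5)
Your proof is correct and follows exactly the same route as the paper: reduce $\sup_{r>0}\lip_r f$ to the countable supremum $\sup_n \lip_{1/n}f$ via the monotonicity \eqref{eqn:Lip_monotone}, apply Lemma~\ref{lem:lip_r_upperSC} to each $\lip_{1/n}f$, and conclude with Proposition~\ref{prop:sup_of_A-upper_semicontinuous} and the identification $\mathcal{T}_{c\sigma}=\mathcal{F}_\sigma$. Nothing is missing.
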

\begin{proof}
    By (\ref{eqn:Lip_monotone}) and (\ref{eqn:lip_as_sup_lip_r}) we conclude that
    $\lip f(x)=\sup\limits_{n\in\mathbb{N}}\lip_{\frac1n}f(x)$ for any $x\in X$. 
    By Lemma~\ref{lem:lip_r_upperSC}, the functions $\lip_{\frac1n}f$ are
    $\mathcal{T}$-upper semicontinuous, where $\mathcal{T}$ is the topology of $X$.
    Therefore, by Proposition~\ref{prop:sup_of_A-upper_semicontinuous} $\lip f$ is
    $\mathcal{T}_{c\sigma}$\nobreakdash-lower semicontinuous.
    This means that $\lip f$ is $\mathcal{F}_\sigma$-lower semicontinuous.
\end{proof}

\begin{lemma}\label{lem:Lip_r_lowerSC}
	Let $X$ and $Y$ be metric spaces, $f\colon X\to Y$ be a continuous function and $r>0$. 
    Then $\Lip_r f\colon X\to\cinv{0}{+\infty}$ is a lower semicontinuous function.
\end{lemma}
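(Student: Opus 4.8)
The plan is to prove lower semicontinuity directly from the definition, dualizing the argument of Lemma~\ref{lem:lip_r_upperSC}. Fix $x_0\in X$ and a real number $\gamma<\Lip_r f(x_0)$; I must produce a neighbourhood $U$ of $x_0$ on which $\Lip_r f>\gamma$. If $\gamma<0$ this is trivial since $\Lip_r f\ge 0$ everywhere, so I may assume $\gamma\ge 0$.

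First I would unwind the suprema. Since $\gamma<\sup_{0<\varrho<r}\Lip^\varrho f(x_0)$, there is some $\varrho$ with $0<\varrho<r$ and $\Lip^\varrho f(x_0)>\gamma$; and since $\Lip^\varrho f(x_0)=\sup_{u\in B(x_0,\varrho)}\frac1\varrho|f(u)-f(x_0)|_Y>\gamma$, there is a \emph{single} witness point $u_0\in B(x_0,\varrho)$ with $|f(u_0)-f(x_0)|_Y>\gamma\varrho$. The key observation is that, unlike the little derivative $\lip_r f$, here $\Lip^\varrho$ is a supremum, so it suffices to keep this one point $u_0$ working after a small perturbation of the centre.

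Next I would enlarge the radius slightly. Put $\varrho_0=|u_0-x_0|_X<\varrho$ and choose $\varrho'$ with $\varrho_0<\varrho'<\varrho$ (so still $\varrho'<r$). Since $\gamma\varrho'<\gamma\varrho<|f(u_0)-f(x_0)|_Y$, the number $\eta:=|f(u_0)-f(x_0)|_Y-\gamma\varrho'$ is positive. By continuity of $f$ at $x_0$, pick $\delta>0$ with $\delta<\varrho'-\varrho_0$ and $|f(x)-f(x_0)|_Y<\eta$ for all $x\in U:=B(x_0,\delta)$. Then for $x\in U$ the triangle inequality gives $|u_0-x|_X\le|u_0-x_0|_X+|x_0-x|_X<\varrho_0+\delta<\varrho'$, so $u_0\in B(x,\varrho')$, and also $|f(u_0)-f(x)|_Y\ge|f(u_0)-f(x_0)|_Y-|f(x_0)-f(x)|_Y>\gamma\varrho'$. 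Hence $\Lip^{\varrho'}f(x)\ge\frac1{\varrho'}|f(u_0)-f(x)|_Y>\gamma$, and since $0<\varrho'<r$ we get $\Lip_r f(x)\ge\Lip^{\varrho'}f(x)>\gamma$ for every $x\in U$. Thus $\{x\in X:\Lip_r f(x)>\gamma\}$ is open, i.e. $\Lip_r f$ is lower semicontinuous.

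The only delicate point — the main obstacle — is the bookkeeping of the constants: the radius $\varrho'$ must be chosen large enough that the perturbed ball $B(x,\varrho')$ still captures the witness $u_0$, yet small enough that dividing by $\varrho'$ does not destroy the strict inequality, and of course still below $r$. This is exactly the (easier, one-point) analogue of the three-parameter juggling of $\gamma_1,\varrho_1,\delta$ carried out in the proof of Lemma~\ref{lem:lip_r_upperSC}; no new idea is needed.
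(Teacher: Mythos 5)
Your proof is correct and follows essentially the same route as the paper's: extract a single witness point $u_0$ with $|f(u_0)-f(x_0)|_Y>\gamma\varrho$, then use continuity of $f$ at $x_0$ to show the same witness still certifies $\Lip^{\varrho'}f(x)>\gamma$ for all $x$ in a small ball around $x_0$ and some admissible radius $\varrho'<r$. The only difference is cosmetic bookkeeping: the paper enlarges the radius to some $\varrho_1\in\big(\varrho,\min\{r,\tfrac{\gamma_1}{\gamma}\varrho\}\big)$ using an auxiliary level $\gamma_1$, whereas you take $\varrho'$ between $|u_0-x_0|_X$ and $\varrho$, which avoids the auxiliary constant and the division by $\gamma$.
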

\begin{proof}
	Fix $r>0$. Let $x_0\in X$ and $\gamma<\Lip_r f(x_0)$.  Then
	$$
	\sup\limits_{\varrho<r}\Lip^\varrho f(x_0)=\Lip_r f(x_0)>\gamma.
	$$
	So, there is $\varrho\in(0;r)$ such that $\Lip^\varrho f(x_0)>\gamma$. 
	Pick $\gamma_1$ such that ${\gamma<\gamma_1<\Lip^\varrho f(x_0)}$. 
	Therefore,
	$$
	\sup\limits_{u\in B(x_0,\varrho)}\big|f(u)-f(x_0)\big|_Y=\varrho\Lip^\varrho f(x_0)>\gamma_1 \varrho.
	$$
	Thus, there is $u\in B(x_0,\varrho)$ with
	$$
	\big|f(u)-f(x_0)\big|_Y>\gamma_1 \varrho.
	$$
	Then we choose $\varrho_1$ such that $\varrho<\varrho_1<\min\big\{r,\frac{\gamma_1}{\gamma}\varrho\big\}$. Consequently, 
	$\gamma \varrho_1<\gamma_1\varrho$.
	By the continuity of $f$ at $x_0$ there exists $\delta>0$ such 
	that $\varrho+\delta<\varrho_1$ and 
	$$\big|f(x)-f(x_0)\big|_Y<\gamma_1\varrho-\gamma \varrho_1\ \ \ \text{ for any }\ \ \ x\in U:=B(x_0,\delta).$$ 
	Consider $x\in U$. Then 
	$$|u-x|_X\le |u-x_0|_X+|x_0-x|_X<\varrho+\delta<\varrho_1,$$
	and, so, $u\in B(x,\varrho_1)$. Consequently,
	$$
	\big|f(u)-f(x)\big|_Y\ge \big|f(u)-f(x_0)\big|_Y-\big|f(x)-f(x_0)\big|_Y>\gamma_1\varrho-(\gamma_1\varrho-\gamma \varrho_1)=\gamma \varrho_1.
	$$
	Hence, $\Lip^{\varrho_1}f(x)>\gamma$. But $0<\varrho_1<r$. Therefore, $\Lip_r f(x)>\gamma$ for any $x\in U$. Thus, $\Lip_r f$ is lower semicontinuous at $x_0$.
\end{proof}
\begin{theorem}\label{thm:Lip_Fs_upperSC}
	Let $X$ and $Y$ be metric spaces, $f\colon X\to Y$ be a continuous function. 
    Then $\Lip f\colon X\to\cinv{0}{+\infty}$ is a 
    $\mathcal{F}_\sigma$-upper semicontinuous function.
\end{theorem}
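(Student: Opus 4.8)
The plan is to mirror the proof of Theorem~\ref{thm:lip_Fs_lowerSC}, using the duality between $\mathcal{A}$-upper and $\mathcal{A}$-lower semicontinuity together with the abstract stability results of Section~4. First I would write $\Lip f$ as a countable infimum of the truncated Lipschitz derivatives: by the monotonicity~(\ref{eqn:Lip_monotone}) the function $r\mapsto\Lip_r f(x)$ is nondecreasing, so by~(\ref{eqn:Lip_as_limsup}),
\[
    \Lip f(x)=\inf_{r>0}\Lip_r f(x)=\inf_{n\in\mathbb{N}}\Lip_{1/n} f(x),\qquad x\in X,
\]
the point being that the sequence $1/n\downarrow 0$ is cofinal, so passing to it does not change the infimum.

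Next I would invoke Lemma~\ref{lem:Lip_r_lowerSC}: since $f$ is continuous, each $\Lip_{1/n} f$ is lower semicontinuous, i.e.\ $\mathcal{T}$-lower semicontinuous, where $\mathcal{T}$ is the topology of $X$. Applying Proposition~\ref{prop:inf_of_A-lower_semicontinuous} with $\mathcal{A}=\mathcal{T}$ and $f_n=\Lip_{1/n} f$ then gives that the infimum $\Lip f$ is $\mathcal{T}_{c\sigma}$-upper semicontinuous. Finally I would recall that $\mathcal{T}_{c\sigma}$ coincides with the Borel class $\mathcal{F}_\sigma$ of $F_\sigma$-subsets of $X$, which yields exactly the claim that $\Lip f$ is $\mathcal{F}_\sigma$-upper semicontinuous.

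As for the main obstacle: essentially all of the genuine content has already been absorbed into Lemma~\ref{lem:Lip_r_lowerSC}, where lower semicontinuity of $\Lip_r f$ is extracted directly from the definition using the continuity of $f$ and a delicate choice of nested radii $\varrho<\varrho_1<r$. Given that lemma and the abstract Proposition~\ref{prop:inf_of_A-lower_semicontinuous}, the present theorem is a routine assembly; the only subtlety worth stating explicitly is the reduction of $\inf_{r>0}$ to $\inf_{n\in\mathbb{N}}$, which is precisely where the monotonicity~(\ref{eqn:Lip_monotone}) is used, since Proposition~\ref{prop:inf_of_A-lower_semicontinuous} requires a \emph{countable} family of functions.
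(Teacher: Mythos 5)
Your proposal is correct and follows essentially the same route as the paper: write $\Lip f=\inf_{n}\Lip_{1/n}f$ via (\ref{eqn:Lip_monotone}) and (\ref{eqn:Lip_as_limsup}), apply Lemma~\ref{lem:Lip_r_lowerSC} for lower semicontinuity of each $\Lip_{1/n}f$, and conclude with Proposition~\ref{prop:inf_of_A-lower_semicontinuous} and the identification $\mathcal{T}_{c\sigma}=\mathcal{F}_\sigma$. No gaps.
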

\begin{proof}
	By (\ref{eqn:Lip_monotone}) and (\ref{eqn:Lip_as_limsup}) we conclude that
    $\Lip f(x)=\inf\limits_{n\in\mathbb{N}}\Lip_{\frac1n}f(x)$ for any $x\in X$. 
    By Lemma \ref{lem:Lip_r_lowerSC}, the functions $\Lip\limits_{\frac{1}{n}}f$
    are $\mathcal{T}$-lower semicontinuous where $\mathcal{T}$ is the topology of $X$.
    Therefore, by Proposition \ref{prop:inf_of_A-lower_semicontinuous}
    $\Lip f$ is $\mathcal{T}_{c\sigma}$\nobreakdash-upper semicontinuous.
    This means that $\Lip f$ is $\mathcal{F}_{\sigma}$-upper semicontinuous.
\end{proof}
\begin{theorem}\label{thm:ILip_upperSC}
	Let $X$ and $Y$ be metric spaces, $f\colon X\to Y$ be a function. 
    Then $\LLip f\colon X\to\cinv{0}{+\infty}$ is an upper
    semicontinuous function.
\end{theorem}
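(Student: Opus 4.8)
The plan is to verify directly that $\LLip f$ is upper semicontinuous by showing that the sublevel set $\{x\in X\colon \LLip f(x)<\gamma\}$ is open for every $\gamma\in\mathbb{R}$. Since $\LLip f\ge 0$, this is trivial for $\gamma\le 0$, so fix $\gamma>0$ and a point $x_0$ with $\LLip f(x_0)<\gamma$. Recall from \eqref{eqn:ILip_definition} that $\LLip f(x_0)=\inf_{r>0}\LLip^rf(x_0)$, so there exists $r>0$ with $\LLip^rf(x_0)<\gamma$, that is,
\[
    \sup_{u\ne v\in B(x_0,r)}\frac{|f(u)-f(v)|_Y}{|u-v|_X}<\gamma.
\]

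The key geometric observation is the triangle-inequality inclusion $B(x,r/2)\subseteq B(x_0,r)$ for every $x$ with $|x-x_0|_X<r/2$: indeed $u\in B(x,r/2)$ gives $|u-x_0|_X\le|u-x|_X+|x-x_0|_X<r$. Consequently, for any such $x$,
\[
    \LLip^{r/2}f(x)=\sup_{u\ne v\in B(x,r/2)}\frac{|f(u)-f(v)|_Y}{|u-v|_X}
    \le \sup_{u\ne v\in B(x_0,r)}\frac{|f(u)-f(v)|_Y}{|u-v|_X}
    =\LLip^rf(x_0)<\gamma,
\]
and therefore $\LLip f(x)=\inf_{\varrho>0}\LLip^{\varrho}f(x)\le\LLip^{r/2}f(x)<\gamma$. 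Hence $B(x_0,r/2)$ is contained in the sublevel set, which is thus open, and $\LLip f$ is upper semicontinuous.

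I expect there to be essentially no obstacle here: unlike Lemmas~\ref{lem:lip_r_upperSC} and~\ref{lem:Lip_r_lowerSC}, where the reference value $f(x)$ moves with the base point and continuity of $f$ had to be invoked, the quantity $\LLip^rf$ is a genuine oscillation-type seminorm of $f$ restricted to a ball, so shrinking the radius and recentering is controlled purely by the inclusion of balls. The only point worth stating explicitly is that this argument uses no hypothesis on $f$ whatsoever — which is consistent with the theorem being stated for an arbitrary function $f\colon X\to Y$ — and, if desired, one can note that $\LLip f$ is in fact the pointwise infimum of the family $\{\LLip^rf\}_{r>0}$ and the above shows each member of a cofinal subfamily is ``locally dominated'', which is exactly what upper semicontinuity of the infimum requires.
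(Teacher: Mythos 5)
Your argument is correct and is essentially identical to the paper's own proof: both select $r>0$ with $\LLip^rf(x_0)<\gamma$, use the inclusion $B(x,r/2)\subseteq B(x_0,r)$ for $x\in B(x_0,r/2)$ to bound $\LLip^{r/2}f(x)$ by $\LLip^rf(x_0)$, and conclude upper semicontinuity. No discrepancies to report.
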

\begin{proof}
	Fix $x_0\in X$ and $\gamma>\LLip f(x_0)$. 
	Since $\LLip f(x_0)=\inf\limits_{r>0}\LLip^rf(x_0)$, 
	there exists $r>0$ such that $\LLip^r f(x_0)<\gamma$. 
	Set $\varrho=\frac{r}{2}$ and consider $x\in B(x_0,\varrho)$. 
	Then $B(x,\varrho)\subseteq B(x_{0},r)$. Consequently, 
	\begin{align*}
		\LLip f(x)&\le\LLip^\varrho f(x)=\sup\limits_{u\ne v\in B(x,\varrho)}\tfrac{1}{|u-v|_X}\big|f(u)-f(v)\big|_Y\\
		&\le
		\sup\limits_{u\ne v\in B(x_0,r)}\tfrac{1}{|u-v|_X}\big|f(u)-f(v)\big|_Y
		=\LLip^r f(x_0)<\gamma
	\end{align*}
	and, hence, $\LLip f$ is upper semicontinuous.
\end{proof}
	Denote
\begin{itemize}
	\item $\mathbb{L}(f)=\big\{x\in X:\LLip f(x)<\infty \big\}$;
	\item $\mathbb{L}^\infty(f)=\big\{x\in X:\LLip f(x)=\infty \big\}=X\setminus \mathbb{L}(f)$;
	\item $L(f)=\big\{x\in X:\Lip f(x)<\infty \big\}$;
	\item $L^\infty(f)=\big\{x\in X:\Lip f(x)=\infty \big\}=X\setminus L(f)$;
	\item $\ell(f)=\big\{x\in X:\lip f(x)<\infty \big\}$;
	\item $\ell^\infty(f)=\big\{x\in X:\lip f(x)=\infty \big\}=X\setminus\ell(f)$;
\end{itemize}

Theorems~\ref{thm:lip_Fs_lowerSC}, \ref{thm:Lip_Fs_upperSC}, \ref{thm:ILip_upperSC},
and Propositions~\ref{prop:duality_A-upper_semicontinuity}, \ref{prop:duality_A-lower_semicontinuity}
yield the following assertions.

\begin{corollary}\label{prp:lL}
	Let $X$ and $Y$ be a metric space, $f\colon X\to Y$ be a continuous function, $x\in X$. Then
	\begin{enumerate}[label=$(\roman*)$]
		\item $\ell(f)$ is a $G_{\delta\sigma}$-set;
		\item $\ell^\infty(f)$ is an $F_{\sigma\delta}$-set;
		\item $L(f)$ is an $F_\sigma$-set;
		\item $L^\infty(f)$ is a $G_\delta$-set;
	\end{enumerate}
\end{corollary}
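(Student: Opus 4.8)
The plan is to read off all four statements as immediate consequences of the semicontinuity classification of $\lip f$ and $\Lip f$ proved above, combined with the duality Propositions~\ref{prop:duality_A-upper_semicontinuity} and \ref{prop:duality_A-lower_semicontinuity} applied with $\mathcal{A}=\mathcal{F}_\sigma=\mathcal{T}_{c\sigma}$, where $\mathcal{T}$ is the topology of $X$. The first (trivial) step is to note that $\lip f$ and $\Lip f$ take values in $\coinv{0}{+\infty}$, hence never attain the value $-\infty$, so that
\[
  \ell(f)=(\lip f)^{-1}\big[\coinv{-\infty}{+\infty}\big],\qquad \ell^\infty(f)=(\lip f)^{-1}\big[\{+\infty\}\big],
\]
and likewise $L(f)=(\Lip f)^{-1}\big[\coinv{-\infty}{+\infty}\big]$ and $L^\infty(f)=(\Lip f)^{-1}\big[\{+\infty\}\big]$. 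This casts the four sets in exactly the shape handled by the duality propositions.

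For $(i)$ and $(ii)$, recall that $\lip f$ is $\mathcal{F}_\sigma$-lower semicontinuous by Theorem~\ref{thm:lip_Fs_lowerSC}, i.e. it is $\mathcal{A}$-lower semicontinuous for $\mathcal{A}=\mathcal{T}_{c\sigma}$. Proposition~\ref{prop:duality_A-lower_semicontinuity}$(iii)$ then gives $\ell^\infty(f)\in\mathcal{A}_\delta=(\mathcal{T}_{c\sigma})_\delta=\mathcal{F}_{\sigma\delta}$, which is $(ii)$, and $\ell(f)\in\mathcal{A}_{\delta c}=\mathcal{T}_{c\sigma\delta c}$; using $\mathcal{A}_{\delta c}=\mathcal{A}_{c\sigma}$ with $\mathcal{A}=\mathcal{T}_{c\sigma}$ together with $(\mathcal{T}_{c\sigma})_c=\mathcal{G}_\delta$ this equals $(\mathcal{G}_\delta)_\sigma=\mathcal{G}_{\delta\sigma}$, which is $(i)$. (Of course $(i)$ and $(ii)$ are complementary, so either one follows from the other.)

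For $(iii)$ and $(iv)$, recall that $\Lip f$ is $\mathcal{F}_\sigma$-upper semicontinuous by Theorem~\ref{thm:Lip_Fs_upperSC}, i.e. $\mathcal{A}$-upper semicontinuous for $\mathcal{A}=\mathcal{T}_{c\sigma}$. Proposition~\ref{prop:duality_A-upper_semicontinuity}$(ii)$ then gives $L(f)\in\mathcal{A}_\sigma=(\mathcal{T}_{c\sigma})_\sigma=\mathcal{F}_\sigma$, which is $(iii)$, and $L^\infty(f)\in\mathcal{A}_{\sigma c}=((\mathcal{T}_{c\sigma})_\sigma)_c=(\mathcal{F}_\sigma)_c=\mathcal{G}_\delta$, which is $(iv)$; again the two are complementary. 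Since all the substantive work is already carried out in Theorems~\ref{thm:lip_Fs_lowerSC} and \ref{thm:Lip_Fs_upperSC}, there is no genuine obstacle here: the only point requiring attention is the bookkeeping of the operations $c,\sigma,\delta$, in particular the absorption identities $\mathcal{A}_{\sigma\sigma}=\mathcal{A}_\sigma$, $\mathcal{A}_{\delta c}=\mathcal{A}_{c\sigma}$ and $(\mathcal{T}_{c\sigma})_c=\mathcal{G}_\delta$, to be sure the resulting Borel classes come out exactly as stated.
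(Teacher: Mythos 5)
Your proposal is correct and follows essentially the same route as the paper: the duality Propositions~\ref{prop:duality_A-upper_semicontinuity} and \ref{prop:duality_A-lower_semicontinuity} applied with $\mathcal{A}=\mathcal{F}_{\sigma}$ to the semicontinuity results of Theorems~\ref{thm:lip_Fs_lowerSC} and \ref{thm:Lip_Fs_upperSC}. The only cosmetic difference is that for $(iii)$ the paper writes out the union $L(f)=\bigcup_{k}(\Lip f)^{-1}\big[\coinv{0}{k}\big]$ explicitly instead of citing Proposition~\ref{prop:duality_A-upper_semicontinuity}$(ii)$, but that is the same computation.
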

\begin{proof}
    $(i)$ We have
    \begin{align*}
        \ell(f) &= \set{x\in X}{\lip f(x) < \infty} \\
                &= (\lip f)^{-1}\big[\coinv{-\infty}{+\infty}] \\
                &= X\setminus(\lip f)^{-1}\big[\{+\infty\}]
    \end{align*}
    and, since $\lip f$ is 
    $\mathcal{F}_{\sigma}$-lower semicontinuous,
    by Proposition~\ref{prop:duality_A-lower_semicontinuity}$(iii)$
    $(\lip f)^{-1}\big[\{+\infty\}]\in\mathcal{F}_{\sigma\delta}$, so 
    $\ell(f)=X\setminus(\lip f)^{-1}\big[\{+\infty\}]\in\mathcal{G}_{\delta\sigma}$.

    $(ii)$ It follows immediately from $(i)$. 

    $(iii)$ It is easy to see, that $L(f)=\bigcup\limits_{k=1}^{\infty}(\Lip f)^{-1}[\coinv{0}{k}]$.
    Since $\Lip f$ is an $\mathcal{F}_{\sigma}$-upper semicontinuous function, each set
    $(\Lip f)^{-1}[\coinv{0}{k}]$ is of $F_{\sigma}$ type, hence $L(f)$ is an
    $F_{\sigma}$-set as a countable sum of $F_{\sigma}$-sets.

    $(iv)$ It follows from $(iii)$.
\end{proof}

\begin{corollary}\label{prp:IL}
	Let $X$ and $Y$ be a metric space, $f\colon X\to Y$ be a  function, $x\in X$. Then
	\begin{enumerate}[label=$(\roman*)$]
		\item $\mathbb{L}(f)$ is an open set;
		\item $\mathbb{L}^\infty(f)$ is a closed set.
	\end{enumerate}
\end{corollary}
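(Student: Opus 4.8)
The plan is to read this off directly from Theorem~\ref{thm:ILip_upperSC}, which asserts that $\LLip f\colon X\to\cinv{0}{+\infty}$ is upper semicontinuous, i.e. $\mathcal{T}$-upper semicontinuous where $\mathcal{T}$ denotes the topology of $X$. No further analysis of $f$ or of the Lipschitz derivatives is needed; the whole point is that the semicontinuity classification already established does the work.

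First I would invoke Proposition~\ref{prop:duality_A-upper_semicontinuity}$(ii)$ with $\mathcal{A}=\mathcal{T}$. It gives $(\LLip f)^{-1}\big[\coinv{-\infty}{+\infty}\big]\in\mathcal{T}_{\sigma}$ and $(\LLip f)^{-1}\big[\{+\infty\}\big]\in\mathcal{T}_{\sigma c}$. The one extra observation to make is that a topology is already closed under arbitrary (in particular countable) unions, so $\mathcal{T}_{\sigma}=\mathcal{T}$ and hence $\mathcal{T}_{\sigma c}=\mathcal{T}_{c}$ is exactly the family of closed subsets of $X$. Since by definition $\mathbb{L}(f)=\big\{x\in X:\LLip f(x)<\infty\big\}=(\LLip f)^{-1}\big[\coinv{-\infty}{+\infty}\big]$, this yields $(i)$: $\mathbb{L}(f)$ is open. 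Alternatively one can argue by hand that $\mathbb{L}(f)=\bigcup_{n=1}^{\infty}(\LLip f)^{-1}\big[\coinv{-\infty}{n}\big]$ is a countable union of open sets, using upper semicontinuity of $\LLip f$ directly.

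For $(ii)$, I would simply note that $\mathbb{L}^\infty(f)=X\setminus\mathbb{L}(f)$ is the complement of an open set, hence closed; equivalently, $\mathbb{L}^\infty(f)=(\LLip f)^{-1}\big[\{+\infty\}\big]\in\mathcal{T}_{\sigma c}=\mathcal{T}_{c}$ by the same application of Proposition~\ref{prop:duality_A-upper_semicontinuity}$(ii)$.

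There is essentially no obstacle here: the statement is a formal corollary of Theorem~\ref{thm:ILip_upperSC} together with the trivial identity $\mathcal{T}_{\sigma}=\mathcal{T}$. The only point worth stating explicitly, so the reader is not left wondering why $\mathcal{F}_{\sigma}$-type phenomena do not appear as they did for $\lip f$ and $\Lip f$, is precisely that $\LLip f$ is genuinely upper semicontinuous (not merely $\mathcal{F}_{\sigma}$-upper semicontinuous), which collapses the Borel hierarchy one level and makes $\mathbb{L}(f)$ open rather than $F_{\sigma}$.
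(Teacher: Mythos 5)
Your proof is correct and matches the paper's intended argument: the paper derives Corollary~\ref{prp:IL} directly from Theorem~\ref{thm:ILip_upperSC} together with Proposition~\ref{prop:duality_A-upper_semicontinuity}, exactly as you do, with the collapse $\mathcal{T}_{\sigma}=\mathcal{T}$ doing the work. Your remark that genuine upper semicontinuity (rather than $\mathcal{F}_{\sigma}$-upper semicontinuity) is what makes $\mathbb{L}(f)$ open instead of merely $F_{\sigma}$ is a fair and accurate observation.
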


\section{Characterization of Lipschitz functions on a convex subset of a normed space}

The following lemma was applied by Buczolich, Hanson, Maga and Vertesy
in certain investigations of Lipschitz derivatives of the real functions 
of real variable.
\begin{lemma}[{\cite[Lemma 2.2]{BuHaMaVe2021}}]\label{lem:BHMV}
    If $E\subseteq\bR$ and $f\colon\bR\to\bR$ such that
    $\lip f\leq\charfunction{E}$ then
    $\abs{f(a)-f(b)}\leq\mu(\cinv{a}{b}\cap E)$ for every
    $a,b\in\bR$ (where $a<b$) so $f$ is Lipschitz and hence 
    absolutely continuous. 
\end{lemma}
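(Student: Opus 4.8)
The plan is to run a one–dimensional maximum (``rising sun'') argument against a carefully chosen comparison function. First I would record two preliminaries. The hypothesis gives $\lip f(x)\le 1<\infty$ for every $x\in\bR$, and $\lip f(x)<\infty$ already forces continuity of $f$ at $x$: if $f$ oscillated by some $\xi>0$ at a point $x_0$, then $\sup_{u\in B(x_0,r)}\abs{f(u)-f(x_0)}\ge\xi$ for all $r>0$, so $\tfrac1r\sup_{u\in B(x_0,r)}\abs{f(u)-f(x_0)}\ge\xi/r\to\infty$, contradicting $\lip f(x_0)<\infty$; hence $f$ is continuous on $\bR$. Next, fix $a<b$ and $\varepsilon\in(0,1)$; by outer regularity of Lebesgue measure $\mu$ I would choose an open $U\supseteq E\cap\cinv ab$ with $\mu(U)<\mu(\cinv ab\cap E)+\varepsilon$, and set $\Phi(t)=\mu\big(U\cap(a,t]\big)+\varepsilon(t-a)+\varepsilon$ for $t\in\cinv ab$. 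Then $\Phi$ is continuous, $\Phi(a)=\varepsilon$, and $\Phi(b)<\mu(\cinv ab\cap E)+\varepsilon(b-a+2)$.

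The core step is to prove that $f(t)-f(a)\le\Phi(t)$ for all $t\in\cinv ab$. Consider $g=(f-f(a))-\Phi$, continuous on $\cinv ab$ with $g(a)=-\varepsilon<0$; let $M$ be its maximum, attained at some $c$, and suppose for a contradiction that $M>0$, so that $c>a$ (since $g(a)<0<M$). Put $\kappa=\charfunction E(c)\in\{0,1\}$. Since $\lip f(c)\le\kappa<\kappa+\tfrac\varepsilon2$, the $\liminf$ in the definition of $\lip$ provides a radius $r>0$, which I would take smaller than $c-a$ and, when $\kappa=1$ (so $c\in E\subseteq U$), also smaller than some $\delta>0$ with $(c-\delta,c+\delta)\subseteq U$, such that $\sup_{u\in B(c,r)}\abs{f(u)-f(c)}\le(\kappa+\tfrac\varepsilon2)r$. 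Then I would pick $s$ with $\tfrac{\kappa+\varepsilon/2}{\kappa+\varepsilon}\,r<s<r$ (possible because $\kappa+\tfrac\varepsilon2<\kappa+\varepsilon$) and put $c'=c-s\in(a,c)$. A direct computation gives $\Phi(c)-\Phi(c')=\mu\big(U\cap(c',c]\big)+\varepsilon s\ge(\kappa+\varepsilon)s$ — when $\kappa=1$ the interval $(c',c]$ lies in $U$, so the measure term equals $s$ — while $c'\in B(c,r)$ forces $f(c')-f(c)\ge-(\kappa+\tfrac\varepsilon2)r$. Adding, $g(c')-g(c)=(f(c')-f(c))+(\Phi(c)-\Phi(c'))\ge(\kappa+\varepsilon)s-(\kappa+\tfrac\varepsilon2)r>0$, so $g(c')>M$, contradicting maximality. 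Hence $M\le0$, which proves the core step.

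From the core step, $f(b)-f(a)\le\Phi(b)<\mu(\cinv ab\cap E)+\varepsilon(b-a+2)$; applying it to $-f$ (which satisfies $\lip(-f)=\lip f\le\charfunction E$) gives the same bound for $f(a)-f(b)$. Letting $\varepsilon\to0^+$ yields $\abs{f(a)-f(b)}\le\mu(\cinv ab\cap E)$, and since $\mu(\cinv ab\cap E)\le b-a$ this makes $f$ $1$-Lipschitz, hence absolutely continuous. The one genuinely delicate point will be the choice of the left translate $c'$ in the core step: moving left by a fixed proportion such as $r/2$ does \emph{not} work, because $\lip$ controls $f$ only at selected scales and the estimate $\sup_{B(c,r)}\abs{f-f(c)}\le(\kappa+\varepsilon/2)r$ is uniform over the whole ball; one must move left by an amount $s$ close enough to $r$ that the honest rate-$(\kappa+\varepsilon)$ growth of $\Phi$ on $(c',c]$ strictly outweighs the worst-case variation $(\kappa+\tfrac\varepsilon2)r$ of $f$ on $B(c,r)$, which is exactly what the interval $\big(\tfrac{\kappa+\varepsilon/2}{\kappa+\varepsilon}r,\,r\big)$ encodes. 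The side conditions $r<c-a$ and, for $\kappa=1$, $(c-\delta,c+\delta)\subseteq U$ are the only places where the openness of $U$ and the inequality $c>a$ enter.
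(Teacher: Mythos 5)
The paper itself does not prove this lemma --- it is imported verbatim as Lemma~2.2 of \cite{BuHaMaVe2021} and used as a black box --- so there is no in-paper argument to compare against. Your proof is correct and self-contained, and it supplies exactly the kind of elementary argument the citation hides: a maximum-principle comparison of $f-f(a)$ against the majorant $\Phi(t)=\mu\big(U\cap(a,t]\big)+\varepsilon(t-a)+\varepsilon$ built from an open neighbourhood $U$ of $E\cap\cinv{a}{b}$. I checked the delicate points: continuity of $f$ does follow from $\lip f<\infty$ under this paper's definition of $\lip$ (the $\liminf$ of $\tfrac1r\sup_{B(x,r)}\abs{f(u)-f(x)}$ blows up at a discontinuity); the $\liminf$ in the definition of $\lip f(c)\le\kappa$ really does yield arbitrarily small admissible radii $r$, which is all you use; the openness of $U$ is correctly invoked to get $\mu\big(U\cap(c',c]\big)=s$ when $c\in E$; and your choice $s\in\big(\tfrac{\kappa+\varepsilon/2}{\kappa+\varepsilon}r,\,r\big)$ is precisely what makes $(\kappa+\varepsilon)s>(\kappa+\tfrac\varepsilon2)r$, so $g(c')>g(c)$ contradicts maximality whenever $M>0$ (and $M>0$ is needed only to place $c$ in $(a,b]$ so that $c'=c-s$ stays in the domain). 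The symmetrization via $-f$ and the passage $\varepsilon\to0^+$ are routine. Two cosmetic remarks only: the additive constant $\varepsilon$ in $\Phi$ is not needed (with $\Phi(a)=0$ one still gets $c\ne a$ from $g(c)=M>0=g(a)$), and you could note explicitly that the final bound $\mu(\cinv{a}{b}\cap E)\le b-a$ gives the $1$-Lipschitz conclusion claimed in the statement.
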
 
In the above, $\mu$ denotes the Lebesgue measure.
We will state the following
\begin{corollary}\label{cor:BHMV}
	Let  $\gamma> 0$ and $f\colon\cinv{0}{1}\to\mathbb{R}$ 
    be a function such that $\lip f(x)\le \gamma $ for any 
    $x\in\cinv{0}{1}$. Then $f$ is $\gamma$-Lipschitz.
\end{corollary}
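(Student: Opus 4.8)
The plan is to deduce the corollary from Lemma~\ref{lem:BHMV} after two reductions: rescaling so that the Lipschitz bound becomes $1$, and extending $f$ to all of $\mathbb{R}$ by constants without increasing its little Lipschitz derivative anywhere.

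First I would define $g\colon\mathbb{R}\to\mathbb{R}$ by $g(x)=f(0)$ for $x\le 0$, $g(x)=f(x)$ for $x\in\cinv{0}{1}$, and $g(x)=f(1)$ for $x\ge 1$, and check that $\lip g(x)\le\gamma$ for every $x\in\mathbb{R}$. For $x\in\oinv{0}{1}$ this is immediate, since $B(x,r)\subseteq\cinv{0}{1}$ for small $r$, so $\Lip^rg(x)=\Lip^rf(x)$ and hence $\lip g(x)=\lip f(x)\le\gamma$. For $x<0$ or $x>1$ the function $g$ is constant near $x$, so $\lip g(x)=0$. The only genuinely delicate case is $x\in\{0,1\}$: here for $0<r\le 1$ the points $u\in B(x,r)$ lying outside $\cinv{0}{1}$ satisfy $g(u)=g(x)$ and therefore contribute nothing to the supremum defining $\Lip^rg(x)$, so $\Lip^rg(x)=\Lip^rf(x)$, and passing to the $\liminf$ as $r\to 0^+$ gives $\lip g(x)=\lip f(x)\le\gamma$ once more.

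Next I would put $h=\tfrac1\gamma g$. Since $\Lip^r(\tfrac1\gamma g)(x)=\tfrac1\gamma\Lip^r g(x)$ and $\gamma>0$, the little Lipschitz derivative scales accordingly, so $\lip h(x)=\tfrac1\gamma\lip g(x)\le 1=\charfunction{\mathbb{R}}(x)$ for all $x\in\mathbb{R}$. Applying Lemma~\ref{lem:BHMV} with $E=\mathbb{R}$ yields $\abs{h(a)-h(b)}\le\mu(\cinv{a}{b}\cap\mathbb{R})=b-a$ for all $a<b$; that is, $h$ is $1$-Lipschitz. Hence $g=\gamma h$ is $\gamma$-Lipschitz on $\mathbb{R}$, and its restriction $f$ is $\gamma$-Lipschitz on $\cinv{0}{1}$, i.e.\ $\norm{f}_{\lip}\le\gamma$, as claimed.

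The main obstacle is the endpoint verification in the first step: one must make sure that a constant extension does not create a steeper difference quotient at $0$ or $1$. This is exactly where the definition $\Lip^rf(x)=\sup_{u\in B(x,r)}\tfrac1r\abs{f(u)-f(x)}$ is used carefully — the newly adjoined points on the outside of the interval carry value $f(x)$, hence do not enlarge the supremum for $r\le 1$, and this persists after taking $\liminf_{r\to0^+}$. Everything else is routine rescaling and an invocation of the quoted lemma.
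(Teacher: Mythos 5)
Your argument is correct and follows essentially the same route as the paper's proof: extend $f$ to $\mathbb{R}$ by constants, rescale by $\tfrac1\gamma$, and invoke Lemma~\ref{lem:BHMV} (you take $E=\mathbb{R}$ where the paper takes $E=\cinv{0}{1}$, an immaterial difference). If anything, your version is more complete, since you explicitly verify that the constant extension satisfies $\lip g\le\gamma$ everywhere --- including at the endpoints $0$ and $1$ --- a hypothesis of the lemma that the paper's proof applies without checking.
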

\begin{proof}
    Extend $f$ to $\tilde{f}\colon\mathbb{R}\to\mathbb{R}$ by 
    $\tilde{f}(x)=f(0)$ if $x<0$ and $\title{f}(x)=f(1)$ if $x>1$. 
    Let $g=\frac{1}{\gamma}\tilde{f}$ and $E=\cinv 01$. Then by 
    Lemma~\ref{lem:BHMV} we conclude that 
    $\frac{1}{\gamma}\abs{f(x)-f(y)}=\abs{g(x)-g(y)}\le\mu\big(\cinv{x}{y}\cap E\big)=\abs{x-y}$ 
    for any $x,y\in\cinv{0}{1}$.
\end{proof}
Next result will allow us to apply 
Lemma \ref{lem:BHMV} and Corollary \ref{cor:BHMV}
for functions defined on normed spaces.
\begin{lemma}\label{lem:lip_linear_composition}
    Let $A$ be a convex subset of the normed space $X$, 
    $f\colon X\to\bR$ be a function, and $a,b\in A$.
    Moreover, let $T\colon\cinv{0}{1}\to A$ be an 
    affine function given by 
    $T(u)=a+t(b-a)$ for $0\leq u\leq 1$
    and $g=f\circ T\colon\cinv{0}{1}\to\bR$.
    Then, 
    \begin{equation}\label{eq:linear_chain_rule}
        \lip g\leq\norm{b-a}\big((\lip f)\circ T\big).
    \end{equation}
\end{lemma}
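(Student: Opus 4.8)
The plan is to reduce the claim to the one-variable defining formula \eqref{eqn:lip_definition}, $\lip g(u_0)=\liminf_{r\to 0^+}\Lip^r g(u_0)$, and to exploit that the affine parametrization $T(u)=a+u(b-a)$ rescales every distance by the constant factor $\norm{b-a}$. First I would dispose of the degenerate case $a=b$: then $T$, and hence $g$, is constant, so $\lip g\equiv 0$, and since $\lip f\ge 0$ the inequality \eqref{eq:linear_chain_rule} is trivial. So assume $a\ne b$ and set $c=\norm{b-a}>0$. (Convexity of $A$ is used only to ensure $T$ takes its values in $A$, so that $g=f\circ T$ is well defined; the estimate itself needs only that $f$ is defined on $X$.)

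Now fix $u_0\in\cinv 01$ and put $x_0=T(u_0)\in A$. The key elementary fact is that $T(u)-T(u_0)=(u-u_0)(b-a)$ for all $u\in\cinv 01$, so $\abs{T(u)-T(u_0)}_X=c\,\abs{u-u_0}$; hence $T$ maps the relative ball $B_{\cinv 01}(u_0,r)$ into $B_X(x_0,cr)$ for every $r>0$. Since $g(u)-g(u_0)=f(T(u))-f(x_0)$, rewriting $\tfrac1r\abs{g(u)-g(u_0)}=c\cdot\tfrac1{cr}\abs{f(T(u))-f(x_0)}$ and taking the supremum over $u\in B_{\cinv 01}(u_0,r)$ yields
\[
    \Lip^r g(u_0)\le c\,\Lip^{cr}f(x_0)\qquad\text{for every }r>0 .
\]

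Finally I would pass to the lower limit as $r\to 0^+$. Since $r\mapsto cr$ is an increasing bijection of $(0,+\infty)$ onto itself, the change of variable $s=cr$ gives $\liminf_{r\to 0^+}\Lip^{cr}f(x_0)=\liminf_{s\to 0^+}\Lip^{s}f(x_0)=\lip f(x_0)$ by \eqref{eqn:lip_definition}, whence $\lip g(u_0)\le c\,\lip f(x_0)=\norm{b-a}\,(\lip f)(T(u_0))$, which is precisely \eqref{eq:linear_chain_rule}. I do not anticipate a real obstacle; the only points calling for a little care are the endpoints $u_0\in\{0,1\}$, where $B_{\cinv 01}(u_0,r)$ is a half-interval but the inclusion $T\big(B_{\cinv 01}(u_0,r)\big)\subseteq B_X(x_0,cr)$ still holds verbatim, and keeping track of the factor $c$ in front of $\Lip^{cr}f$ so that the lower limit is computed correctly. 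Note also that $\lip$ is built from the open-ball quantity $\Lip^{\rho}$, matching the relative ball $B_{\cinv 01}(u_0,r)$ used above, so there is no need to invoke the earlier proposition comparing $\Lip^r$ with $\Lip^r_+$.
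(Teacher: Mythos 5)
Your proof is correct, and it is executed along a cleaner route than the paper's. Both arguments rest on the same observation --- the affine map $T$ rescales every distance by the constant $\norm{b-a}$ --- but the paper processes it by writing the difference quotient as the product $\tfrac{\norm{b-a}\abs{u-u_0}}{r}\cdot\tfrac{\abs{f(T(u))-f(T(u_0))}}{\norm{T(u)-T(u_0)}}$, bounding the supremum of the product by the product of the suprema, and then comparing the remaining factor with $\liminf_{r\to0^+}\sup_{\norm{x-x_0}<r}\tfrac{\abs{f(x)-f(x_0)}}{\norm{x-x_0}}$, which it identifies with $\lip f(x_0)$; note that with the denominator $\norm{x-x_0}$ (rather than $r$) that last quantity is, by the definitions in Section~2, really $\Lip f(x_0)$, so this identification is the delicate point of the paper's argument. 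Your version never leaves the $\Lip^r$ framework of \eqref{eqn:lip_definition}: the pointwise inequality $\Lip^r g(u_0)\le\norm{b-a}\,\Lip^{\norm{b-a}r}f(T(u_0))$, followed by the change of variable $s=\norm{b-a}r$ inside the lower limit, gives \eqref{eq:linear_chain_rule} directly and unambiguously. You also correctly isolate the degenerate case $a=b$, which the paper's computation silently excludes (its factor $1/\norm{T(u)-T(u_0)}$ is undefined there). What your route buys is exactly this robustness; what the paper's buys is the suggestive ``chain rule'' shape of the intermediate expression, which motivates the remark following the lemma.
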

As $\big((\lip f)\circ T\big)\norm{b-a}=\big((\lip f)\circ T)\cdot\norm{\frac{\diff T}{\diff u}}$,
the right side of the inequality
\eqref{eq:linear_chain_rule} 
is reminiscent of the ``chain rule''
for the usual derivative.
Nevertheless, the inequality can be strict.
To see that, take a function $f\colon\bR\to\bR$
defined as $f(x,y)=y$, $(x,y)\in\bR$ and consider
the usual distance on $\bR$. 
\begin{proof}[Proof of the Lemma \ref{lem:lip_linear_composition}]
    Fix $u_0\in\cinv{0}{1}$ and
    observe, that 
    \begin{equation}\label{eq:norm_of_linear_difference}
        \norm{T(u)-T(u_0)}=\norm{(u-u_0)(b-a)}=\abs{u-u_0}\norm{b-a}, \; 0\leq u\leq 1.
    \end{equation}
    Put $x_0=T(u_0)$.
    We have
    \begin{align}
        \lip g(u_0) &= \liminf_{r\to 0^{+}}\sup_{\abs{u-u_0}<r}\frac{\abs{g(u)-g(u_0)}}{r} \nonumber \\
        &= \liminf_{r\to 0^{+}}\sup_{0<\abs{u-u_0}<r}\frac{\abs{f(T(u))-f(T(u_0))}}{r} \nonumber \\
        &= \liminf_{r\to 0^{+}}\sup_{0<\abs{u-u_0}<r}
           \frac{\norm{T(u)-T(u_0)}}{r}\cdot\frac{\abs{f(T(u))-f(T(u_0))}}{\norm{T(u)-T(u_0)}} \nonumber \\
        &= \liminf_{r\to 0^{+}}\sup_{0<\abs{u-u_0}<r}\frac{\norm{b-a}\abs{u-u_0}}{r}
           \frac{\abs{f(T(u))-f(T(u_0))}}{\norm{T(u)-T(u_0)}}, \label{eq:lipg_form}
    \end{align}
    where the last equality follows from \eqref{eq:norm_of_linear_difference}.
    Note, that
    \begin{equation}\label{eq:normb_a}
        \sup_{\abs{u-u_0}<r}\frac{\norm{b-a}\abs{u-u_0}}{r}=\norm{b-a}.
    \end{equation}
    Thus, by \eqref{eq:lipg_form} and \eqref{eq:normb_a}
    \begin{align*}
        \lip g(u_0) 
           &\leq \liminf_{r\to 0^{+}}\sup_{0<\abs{t-u_0}<r}\frac{\norm{b-a}\abs{t-u_0}}{r}
                 \sup_{0<\abs{u-u_0}<r}\frac{\abs{f(T(u))-f(T(u_0))}}{\norm{T(u)-T(u_0)}} \\
           &\leq\norm{b-a}\liminf_{r\to 0^{+}}\sup_{0<\abs{u-u_0}<r}
           \frac{\abs{f(T(u))-f(T(u_0))}}{\norm{T(u)-T(u_0)}} \\
           &\leq\norm{b-a}\liminf_{r\to 0^{+}}\sup_{\norm{x-x_0}<r}
           \frac{\abs{f(x)-f(x_0)}}{\norm{x-x_0}} = \norm{b-a}\lip f(x_0). \qedhere
    \end{align*}
\end{proof}

\begin{theorem}\label{thm:littleLipLeqGamma}
	Let $D$ be a convex subset of a normed space $X$, 
    $Y$ be a metric space, $f\colon D\to Y$ be a function and $\gamma\ge 0$. 
    Then $f$ is $\gamma$-Lipschitz if and only if $\lip f(x)\le \gamma$ for any $x\in D$.
\end{theorem}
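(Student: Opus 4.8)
The plan is to handle the two implications separately: ``$f$ is $\gamma$-Lipschitz $\Rightarrow\lip f\le\gamma$'' is immediate from the definition, while the converse will be reduced to the scalar one-variable statement of Corollary~\ref{cor:BHMV} by means of Lemma~\ref{lem:lip_linear_composition}.

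For the easy direction, assume $\norm{f}_{\lip}\le\gamma$. Then for every $x\in D$, every $r>0$ and every $u\in B(x,r)$ we have $\abs{f(u)-f(x)}_Y\le\gamma\abs{u-x}_X<\gamma r$, hence $\Lip^r f(x)\le\gamma$; letting $r\to0^+$ in \eqref{eqn:lip_definition} gives $\lip f(x)\le\gamma$. (Note this direction uses neither convexity of $D$ nor the norm, and no continuity of $f$.)

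For the converse, suppose $\lip f(x)\le\gamma$ for all $x\in D$ and fix $a,b\in D$; it suffices to prove $\abs{f(a)-f(b)}_Y\le\gamma\abs{a-b}_X$. First I would pass to the scalar function $\varphi\colon D\to\bR$ defined by $\varphi(x)=\abs{f(x)-f(b)}_Y$: the triangle inequality yields $\abs{\varphi(u)-\varphi(v)}\le\abs{f(u)-f(v)}_Y$, hence $\Lip^r\varphi\le\Lip^r f$ pointwise on $D$ and therefore $\lip\varphi(x)\le\lip f(x)\le\gamma$ for every $x\in D$. Since $D$ is convex, the segment $[a,b]$ lies in $D$, so I may introduce the affine map $T\colon\cinv01\to D$, $T(u)=a+u(b-a)$, and set $g=\varphi\circ T\colon\cinv01\to\bR$. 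Applying Lemma~\ref{lem:lip_linear_composition} to $\varphi$ gives $\lip g\le\norm{b-a}\,\big((\lip\varphi)\circ T\big)\le\gamma\norm{b-a}$ on $\cinv01$.

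To finish, fix $\varepsilon>0$ and put $\gamma'=\gamma\norm{b-a}+\varepsilon>0$. Then $\lip g(u)\le\gamma'$ for all $u\in\cinv01$, so Corollary~\ref{cor:BHMV} shows that $g$ is $\gamma'$-Lipschitz, whence
\[
    \abs{f(a)-f(b)}_Y=\abs{\varphi(a)-\varphi(b)}=\abs{g(0)-g(1)}\le\gamma'=\gamma\norm{b-a}+\varepsilon .
\]
Letting $\varepsilon\to0^+$ gives $\abs{f(a)-f(b)}_Y\le\gamma\abs{a-b}_X$, and since $a,b\in D$ were arbitrary, $\norm{f}_{\lip}\le\gamma$. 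The only point that needs care is the bookkeeping in the two reductions — scalarizing through $\varphi=\abs{f(\cdot)-f(b)}_Y$, which does not increase the little Lipschitz derivative, and restricting to the segment $[a,b]\subseteq D$ via $T$ so that Lemma~\ref{lem:lip_linear_composition} and Corollary~\ref{cor:BHMV} become applicable; the remaining estimates are routine.
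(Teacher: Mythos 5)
Your proof is correct and follows essentially the same route as the paper: reduce to the segment $[a,b]$ via the affine map $T$, apply Lemma~\ref{lem:lip_linear_composition} and then Corollary~\ref{cor:BHMV}. You add two refinements that the paper's own proof glosses over, and both are genuinely needed for the theorem as stated: first, since $Y$ is only a metric space, the composition $f\circ T$ is not real-valued, so Lemma~\ref{lem:lip_linear_composition} and Corollary~\ref{cor:BHMV} do not apply to it directly; your scalarization $\varphi(x)=\abs{f(x)-f(b)}_Y$, together with the reverse triangle inequality giving $\lip\varphi\le\lip f$, repairs this. Second, Corollary~\ref{cor:BHMV} assumes a strictly positive Lipschitz bound, and your $\varepsilon$-perturbation $\gamma'=\gamma\norm{b-a}+\varepsilon$ covers the degenerate cases $\gamma=0$ or $a=b$. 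The easy direction and the remaining estimates are handled correctly.
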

\begin{proof}
    Fix $a,b\in D$. Define
    $T\colon\cinv{0}{1}\to D$ as
    \[
        T(u)=a+u(b-a) \; \text{ for } u\in\cinv{0}{1}
    \]
    and put $g=f\circ T\colon\cinv{0}{1}\to\bR$.
    Applying Lemma \ref{lem:lip_linear_composition}
    we get
    \begin{equation*}
        \lip g(u) \le \norm{b-a}\lip f\big(T(u)\big) \leq \norm{b-a}\gamma.
    \end{equation*}
    for any $u\in\cinv{0}{1}$. Therefore, Corollary \ref{cor:BHMV} implies that 
    $g$ is Lipschitz with the constant $\gamma_1=\norm{b-a}\gamma$.
    Thus,
    \begin{align*}
        \abs{f(a)-f(b)} &= \abs{g(0)-g(1)} \\
        &\le\gamma_1\abs{0-1}\\
        &= \gamma\norm{a-b}.
    \end{align*}
    So, $f$ is $\gamma$-Lipschitz on $D$.  
\end{proof}
    By $\norm{\cdot}_{\infty}$ we denote standard norm on
    space of bounded functions $B(D,Y)$ acting
    from a set $D$ into a normed space $Y$, i.e. for any $h\colon D\to Y$
     we have 
    $\norm{h}_{\infty}=\sup\limits_{x\in D}\norm{h(x)}$.
\begin{corollary}
    Let $f\colon D\to Y$ be a continuous function, where
    $D$ is a convex subset of some normed space $X$ and $Y$ be also a
    normed space. Then,
    $\norm{f}_{\mathrm{lip}}=\norm{\lip f}_{\infty}$.
\end{corollary}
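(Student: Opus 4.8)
The plan is to derive this identity as a direct corollary of Theorem~\ref{thm:littleLipLeqGamma}, which already carries all the substance; what remains is only bookkeeping with the value $+\infty$. Both sides of the claimed equality are elements of $[0,+\infty]$: by definition $\norm{f}_{\mathrm{lip}}\ge 0$ (the standing convention $\sup\varnothing=0$ covering the degenerate case where $D$ is a singleton), while $\norm{\lip f}_{\infty}=\sup_{x\in D}\lip f(x)\ge 0$ since $\lip f\ge 0$. Note that on the right-hand side $\norm{\cdot}_{\infty}$ is merely the supremum of the nonnegative extended-real-valued function $\lip f$, so the fact that $Y$ is a normed space is irrelevant there; that $Y$ is a metric space is all that will be used.

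First I would dispatch the inequality $\norm{\lip f}_{\infty}\le\norm{f}_{\mathrm{lip}}$ by a pointwise estimate: for every $x\in D$ and every $r>0$,
\[
    \lip f(x)\le\LLip f(x)\le\LLip^{r}f(x)\le\norm{f}_{\mathrm{lip}},
\]
using \eqref{eqn:Lip_estimations}, \eqref{eqn:ILip_definition}, and the observation that $\LLip^{r}f(x)$ is a supremum over pairs of points of $B(x,r)\subseteq D$, hence no larger than the supremum defining $\norm{f}_{\mathrm{lip}}$. Taking the supremum over $x\in D$ yields the inequality. This half uses neither convexity of $D$ nor continuity of $f$.

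Then I would prove the reverse inequality $\norm{f}_{\mathrm{lip}}\le\norm{\lip f}_{\infty}$, which is the place where convexity is genuinely invoked. If $\norm{\lip f}_{\infty}=+\infty$ there is nothing to prove. Otherwise, put $\gamma:=\norm{\lip f}_{\infty}<\infty$; then $\lip f(x)\le\gamma$ for all $x\in D$, so Theorem~\ref{thm:littleLipLeqGamma} (applied with $Y$ regarded as a metric space) shows that $f$ is $\gamma$-Lipschitz, i.e.\ $\norm{f}_{\mathrm{lip}}\le\gamma=\norm{\lip f}_{\infty}$. Combining the two inequalities gives equality.

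I do not expect a genuine obstacle: the corollary is essentially a restatement of Theorem~\ref{thm:littleLipLeqGamma}. The only points requiring a moment of care are the handling of $+\infty$ on both sides (done above by separating off the case $\norm{\lip f}_{\infty}=\infty$) and the trivial singleton case (handled by the convention $\sup\varnothing=0$ together with the fact that $\lip f$ vanishes at isolated points).
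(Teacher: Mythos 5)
Your proof is correct and follows essentially the same route as the paper: both derive the identity from Theorem~\ref{thm:littleLipLeqGamma}, the paper by rewriting $\lipnorm{f}$ as $\inf\{\gamma>0:\lip f\le\gamma\text{ on }D\}$ and yours by splitting into two inequalities (one via the theorem, one via the elementary chain $\lip f\le\LLip f\le\LLip^r f\le\lipnorm{f}$). Your version is if anything slightly more careful, since it handles the case $\norm{\lip f}_\infty=+\infty$ explicitly, which the paper's chain of equalities glosses over.
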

\begin{proof}
    We simply check, that
    \begin{align*}
        \lipnorm{f} &= \inf\set{\gamma>0}{f\text{ is }\gamma\text{-Lipschitz}} \\
        &= \inf\set{\gamma>0}{\lip f(x)\leq\gamma, \, x\in D} \\
        &= \sup\set{\lip f(x)}{x\in D} = \sup_{x\in D}\abs{\lip f(x)} = \norm{\lip f}_{\infty}, 
    \end{align*}
    where the second equality follows from Theorem~\ref{thm:littleLipLeqGamma}
\end{proof}
Therefore,
$\lip$ is an isometric injection
of the normed space $\Lip_a(D,Y)$ 
with some $a\in X$, 
into the space $B(D,\bR)$. 

\section{Baire limit functions of Lipschitz derivatives}

For a given function $f\colon X\to\overline{\bR}$, defined on a metric
space $X$, their \emph{upper Baire function} $f^{\vee}$
is defined by
\[
    f^{\vee}(x)=\inf_{U\in\Neig{x}}\sup_{u\in U}f(u), \; x\in X,
\]
and their \emph{lower Baire function} $f^{\wedge}$ is defined by
\[
    f^{\wedge}(x)=\sup_{U\in\Neig{x}}\inf_{u\in U}f(u), \; x\in X,
\]
where $\Neig{x}$ is the family of all the neighborhoods of $x$ in $X$.
(See, for example, \cite{MVONVV}.)
The upper Baire function $f^{\vee}$ is upper semicontinuous and
the lower Baire function $f^{\wedge}$ is lower semicontinuous.

A subset $D$ of a normed space $X$ is called \emph{locally convex} if for any point $x\in D$ and 
their neighborhood $U$ of $x$ in $D$ there is a convex neighborhood $V$ of $x$ in $D$ such that 
$V\subseteq U$. For example, every convex set and every open set in $X$ is locally convex. 
\begin{theorem}
    Let $D$ be a locally convex subset of a normed space $X$
    and let ${f\colon D\to\bR}$ be a function. Then
    \[
        (\lip f)^{\vee}=(\Lip f)^\vee=\LLip f.
    \]
\end{theorem}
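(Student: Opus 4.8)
The plan is to prove the chain of equalities $(\lip f)^{\vee}=(\Lip f)^{\vee}=\LLip f$ by first establishing two easy inequalities and then closing the loop with the hard direction using Theorem~\ref{thm:littleLipLeqGamma}. First I would record the obvious pointwise bounds: by \eqref{eqn:Lip_estimations} we have $\lip f\le\Lip f\le\LLip f$ everywhere, and since the upper Baire operator is monotone and $\LLip f$ is already upper semicontinuous by Theorem~\ref{thm:ILip_upperSC}, applying $(\,\cdot\,)^{\vee}$ yields
\[
    (\lip f)^{\vee}\le(\Lip f)^{\vee}\le(\LLip f)^{\vee}=\LLip f.
\]
So the whole theorem reduces to the single reverse inequality $\LLip f(x)\le(\lip f)^{\vee}(x)$ at an arbitrary $x\in D$.

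To prove this, fix $x_0\in D$ and let $\gamma>(\lip f)^{\vee}(x_0)$; the goal is $\LLip f(x_0)\le\gamma$. By definition of $(\lip f)^{\vee}$ there is a neighborhood $U$ of $x_0$ in $D$ with $\lip f(u)<\gamma$ for all $u\in U$. Here is where local convexity enters: shrink $U$ to a convex neighborhood $V\subseteq U$ of $x_0$ in $D$. Now on the convex set $V$ we have $\lip f(v)\le\gamma$ for every $v\in V$ (the restriction $\lip(f|_V)$ is pointwise $\le\lip f$ on $V$, so it is also bounded by $\gamma$), and Theorem~\ref{thm:littleLipLeqGamma} applied to $f|_V$ on the convex set $V$ gives that $f|_V$ is $\gamma$-Lipschitz, i.e. $\lipnorm{f|_V}\le\gamma$. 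Since $V$ is a neighborhood of $x_0$, there is $r>0$ with $B(x_0,r)\cap D\subseteq V$, and then
\[
    \LLip^r f(x_0)=\sup_{u\ne v\in B(x_0,r)\cap D}\frac{|f(u)-f(v)|}{|u-v|_X}\le\lipnorm{f|_V}\le\gamma,
\]
whence $\LLip f(x_0)=\inf_{r>0}\LLip^r f(x_0)\le\gamma$. Letting $\gamma\downarrow(\lip f)^{\vee}(x_0)$ finishes the reverse inequality.

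One small care point: the Lipschitz derivatives and balls in the definitions are taken relative to the ambient metric space, so when $f$ is only defined on $D$ one works with $D$ as the metric space in its own right and reads $B(x_0,r)$ as $B_D(x_0,r)=B_X(x_0,r)\cap D$; Theorem~\ref{thm:littleLipLeqGamma} is stated for $f$ on a convex subset and is exactly what is needed for $f|_V$. The main obstacle is genuinely just the reverse inequality, and within it the only substantive ingredient is the characterization Theorem~\ref{thm:littleLipLeqGamma} (which itself rests on Corollary~\ref{cor:BHMV} and Lemma~\ref{lem:BHMV}); everything else — monotonicity of $(\,\cdot\,)^{\vee}$, upper semicontinuity of $\LLip f$, and the trivial inequality chain — is bookkeeping. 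I would also remark at the end that the hypothesis that $f$ is real-valued is used precisely through Theorem~\ref{thm:littleLipLeqGamma} (via the Lebesgue-measure Lemma~\ref{lem:BHMV}), so the statement as given is for $f\colon D\to\bR$ rather than for a general metric target.
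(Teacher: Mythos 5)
Your proposal is correct and follows essentially the same route as the paper: the easy chain $(\lip f)^{\vee}\le(\Lip f)^{\vee}\le(\LLip f)^{\vee}=\LLip f$ via Theorem~\ref{thm:ILip_upperSC}, then the reverse inequality by taking a convex neighborhood on which $\lip f<\gamma$ and invoking Theorem~\ref{thm:littleLipLeqGamma} to bound $\LLip f(x_0)$ by $\gamma$. Your explicit remarks that $\lip(f|_V)\le(\lip f)|_V$ and that the balls must be read relative to $D$ are welcome clarifications that the paper leaves implicit, but they do not change the argument.
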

\begin{proof}
    Since $\lip f\leq\Lip f\le \LLip f$ and $\LLip f$ is upper semicontinuous by Theorem~\ref{thm:ILip_upperSC}, we have
    \[
        (\lip f)^{\vee}\le(\Lip f)^\vee\leq(\LLip f)^{\vee}=\LLip f.
    \]
    Therefore, it is enough to prove that $\LLip f\le(\lip f)^\vee$.
    Fix $x_0\in D$. The case where  $(\lip f)^{\vee}(x_0)=\infty$ is obvious.  So, we suppose that $(\lip f)^{\vee}(x_0)<\infty$. Let $\gamma>(\lip f)^{\vee}(x_0)$.
    Then,
    there exists a convex neighborhood $U$ of $x_0$, such that
    \[
        \lip f(x)<\gamma \, \text{ for any } x\in U.
    \]
    By Theorem~\ref{thm:littleLipLeqGamma}, the function $f$ is $\gamma$-Lipschitz on $U$.
    Hence,
    \begin{align*}
        \LLip f(x_0) &= \inf_{r>0}\lipnorm{f\big\vert_{\ball{x_0}{r}}} \\
        &\leq \lipnorm{f\big\vert_{U}} \leq \gamma,
    \end{align*}
    where $\ball{x_0}{r}$ means the ball in the metric subspace $D$.
    Passing to the limit with 
    $\gamma\to(\lip f)^{\vee}(x_0)$
    we obtain the desired inequality.
\end{proof}

%






\end{document}